\newcommand{\dmwa}{\mbox{discrete replicator dynamics}} 
\newcommand{\edmwa}{\mbox{\em discrete replicator dynamics}} 
\newcommand{\Real}{\mbox{${\mathbb R}$}}
\newcommand{\RJ}{\mbox{${\mathbb J}$}}
\newcommand{\xx}{\mbox{\boldmath $x$}}
\newcommand{\yy}{\mbox{\boldmath $y$}}
\newcommand{\vv}{\mbox{\boldmath $v$}}
\newcommand{\rr}{\textbf{r}}
\newcommand{\qq}{\mbox{\boldmath $q$}}
\newcommand{\kk}{\mbox{\boldmath $k$}}
\newcommand{\zz}{\mbox{\boldmath $z$}}
\newcommand{\eps}{\mbox{$\epsilon$}}
\newcommand{\eeps}{\mbox{\boldmath $\epsilon$}}
\newcommand{\pp}{\mbox{\boldmath $p$}}
\newcommand{\D}{\mbox{$\Delta$}}
\newcommand{\ones}{\mbox{${\bf 1}$}}
\newcommand{\defeq}{\stackrel{\textup{def}}{=}}
\DeclareMathOperator*{\argmax}{arg\,max}
\newtheorem{thm}{Theorem}
\newaliascnt{lem}{thm}
\newtheorem{lem}[lem]{Lemma}
\newtheorem{definition}[thm]{Lemma}
\newtheorem{claim}[thm]{Claim}
\newtheorem{remark}[thm]{Remark}
\newaliascnt{mydef}{thm}
\newtheorem{mydef}[mydef]{Definition}
\newaliascnt{prop}{thm}
\newaliascnt{clo}{thm}
\theoremstyle{nonumberplain}
\newtheorem{proof}{Proof}
\newcommand{\etal}{{\em et al.~}}
\newcommand{\subtitle}[1]{%
  \posttitle{%
    \par\end{center}
    \begin{center}\large#1\end{center}
    \vskip0.5em}%
}
\begin{document}

\title{Natural Selection as an Inhibitor of Genetic Diversity} 
\subtitle{Multiplicative Weights Updates Algorithm and a Conjecture of Haploid Genetics}

\author{
Ruta Mehta\\Georgia Institute of Technology\\ rmehta@cc.gatech.edu
\and Ioannis Panageas\\Georgia Institute of Technology\\ioannis@gatech.edu
\and Georgios Piliouras\\California Institute of Technology\\gpiliour@caltech.edu}

\date{}

\maketitle

\thispagestyle{empty}

\begin{abstract}
In a recent series of papers \cite{Arxiv:DBLP:journals/corr/abs-1208-3160,ITCS:DBLP:dblp_conf/innovations/ChastainLPV13,PNAS2:Chastain16062014}  a surprisingly strong connection was discovered between standard models of evolution in mathematical biology and Multiplicative Weights Updates Algorithm, a ubiquitous model of online learning and optimization. These papers  establish that mathematical models of biological evolution are tantamount to applying \dmwa~\cite{akin,Hofbauer98}, a close variant of MWUA, on coordination games. This connection allows for introducing insights from the study of game theoretic dynamics into the field of mathematical biology.

Using these results as a stepping stone, we show that  mathematical models of haploid evolution imply the
extinction of genetic diversity in the long term limit, a widely believed conjecture in genetics
\cite{barton2014diverse}.  In game theoretic terms we show that in the case of coordination games, under minimal genericity
assumptions, \dmwa~converge to pure Nash equilibria for all but a zero measure of initial conditions. This result holds despite the fact
 that mixed Nash equilibria can be exponentially (or even uncountably) many, completely dominating in number the set of pure
 Nash equilibria.
Thus, in haploid organisms the long term preservation of
 genetic diversity needs to be safeguarded by other evolutionary mechanisms such as mutations and
  speciation.

\end{abstract}

\newpage
\clearpage
\setcounter{page}{1}

\section{Introduction}

Decoding the mechanisms 
 of biological evolution has been one of the most inspiring contests for the human mind. The modern theory of population genetics has been derived by combining the Darwinian concept of natural selection and Mendelian genetics.  Detailed experimental studies of a species of fruit fly, Drosophila, allowed for
 a unified understanding of evolution that encompasses both
the Darwinian view of 
continuous evolutionary improvements and the discrete nature of Mendelian genetics.
The key insight is that evolution relies on the progressive selection of organisms with advantageous
mutations.  This understanding has lead to precise mathematical formulations of such evolutionary mechanisms,
dating back to the work of Fisher, Haldane, and Wright \cite{HistoryofEvolutionIdea} in the beginning of the twentieth century.

The existence of dynamical models of genotypic evolution, however, does not  offer  by itself   clear, concise insights
about the future states of the phenotypic landscape.\footnote{See Section \ref{sec:prelEvol} for (non-technical) definition of biological terms.} Which allele combinations, and as a result, which attributes will take over?
  \textit{Prediction of the evolution
 of the phenotypic landscape is a key, alas not well understood, question in the study of biological systems \cite{quanta14}.}

 Despite the advent of detailed mathematical models, still at the forefront of our understanding lie experimental studies and simulations. Of course, this is to some extent inevitable since the involved dynamical systems are nonlinear and  hence a complete theoretical understanding of all related questions seems intractable \cite{Scriven28081959,EVO:EVO12354}.
Nevertheless, some rather useful qualitative statements have been established.

Nagylaki \cite{Nagylaki1} showed that, when mutations do not affect reproduction success by a lot\footnote{This is referred to as the weak selection regime and it corresponds to a well supported principle known as Kimura's neutral theory.}, the system state converges quickly to the
 so-called Wright manifold, where the distribution of genotypes is a product distribution of the allele frequencies in the population.
In this case, in order to keep track of the distribution of genotypes in the population it suffices to record the distribution of the different alleles for each gene. The overall distribution of genotypes can be recovered by simply taking products of the allele frequencies. Nagylaki \etal  \cite{Nagylaki2}  have also shown that under hyperbolicity assumptions (\textit{e.g.}, isolated equilibria) such systems converge.

Recently, Chastain \etal has built on Nagylaki's  work by establishing an informative connection between these mathematical models of
population genetics and the multiplicative update algorithm (MWUA).  MWUA is a ubiquitous online learning dynamic
\cite{Arora05themultiplicative}, which is known to enjoy numerous connections to biologically relevant mathematical models.
Specifically, its continuous time limit is equivalent to the replicator dynamics (in its standard continuous
form)~\cite{Kleinberg09multiplicativeupdates} and its equivalent up to a smooth change of variables to the
Lotka-Volterra equations~\cite{Hofbauer98}. In
\cite{Arxiv:DBLP:journals/corr/abs-1208-3160,ITCS:DBLP:dblp_conf/innovations/ChastainLPV13} another strong
such connection was established. Specifically, under the assumption of weak selection 
 standard models of population
genetics are shown to be closely related to applying discrete replicator dynamics \footnote{This
 MWUA variant, which Chastain \etal refer to as discrete MWUA, is already a well established dynamic in the literature of mathematical biology and evolutionary game theory~\cite{akin,Hofbauer98} under the name discrete (time, version of) replicator dynamics and to avoid confusion we will refer to it by its standard name.} on a coordination game.

The resulting coordination game is as follows: Each gene is an agent and its available strategies are its alleles. Any combination of
strategies/alleles (one for each gene/agent) gives rise to a specific genotype/individual. The common utility of each gene/agent at
that genotype/outcome is equal to the fitness of that phenotype\footnote{In the weak selection regime this is a number in [1-s,1+s] for
some small $s>0$.}. 
If we interpret the frequency of the allele in the population as mixed (randomized) strategies in this  game then the population
genetics model reduces to each agent/gene updating their distribution according to \dmwa.

This connection allows for the translation of results between the areas of genetics and game theory. We begin with a brief overview of some key insights that have emerged  thus far.

In \dmwa~ the rate of increase of the probability of a given strategy is directly proportional to its current expected utility. In population genetic terms, this expected utility reflects the average fitness of a specific allele when matched with the current mixture of alleles of the other genes. Livnat \etal \cite{PNAS1:Livnat16122008} coined  the term mixability  to refer to this beneficial attribute. In other words, an allele with high mixability achieves high fitness when paired against the current allele distribution. Naturally, this trait is not a standalone characteristic of an allele but depends on the current state of the system, \textit{i.e.}, the other allele frequencies. An allele that enjoys a high mixability in one distribution of alleles, might exhibit a low mixability in another. So,  although mixability offers a palpable interpretation of how evolutionary models behaves in a single time step, it does not offer insights about the long term behavior.

Game theory, however, can provide us with clues about the long term behavior as well. Specifically,
\dmwa~converges to sets of fixed points in variants of coordination games \cite{akin,Soda14}. 
This  allows for a concise characterization of the possible limit points of the population genetics model, since they coincide
with the set of equilibria (fixed-points). \footnote{A mixed strategy profile is an equilibrium of our system if
and only if, for each agent,  all strategies which are played with positive probability have equal expected utility. This
set encompasses the set of Nash equilibria of the underlying game, since they furthermore require that any
strategy that is not played by any agent must result in expected utility that is no greater than his current expected
payoff. This requirement is not present in \dmwa, since it does not explore new strategies.}.  In
\cite{Arxiv:DBLP:journals/corr/abs-1208-3160,ITCS:DBLP:dblp_conf/innovations/ChastainLPV13} it was observed that random two
agent coordination games (in the weak selection payoff regime) exhibit (in expectation) exponentially many such mixed
strategies. The abundance of such mixed Nash equilibria seems like a strong indicator that (i) the long term system
behavior will result in a state of high genetic variance (highly mixed population), (ii) we cannot even efficiently
enumerate the set of all biologically relevant limiting behaviors, let alone predict them.  We show that this
intuition does not reflect accurately the dynamical system behavior.
\medskip

\noindent\textbf{Our game theoretic results.} We show that given a generic two agent coordination games, starting from all but a zero
measure of initial conditions, discrete MWUA converges to pure, strict Nash equilibria. The genericity assumption is minimal
and merely requires that
any row/column of the payoff matrix have distinct entries\footnote{This genericity
assumption, for example, is trivially satisfied with probability one, if the entries of the matrix are iid from a
distribution that is continuous and symmetric around zero, say uniform in $[-1, 1]$ as in
\cite{Arxiv:DBLP:journals/corr/abs-1208-3160,ITCS:DBLP:dblp_conf/innovations/ChastainLPV13}. This class of games contains instances
with uncountably many Nash equilibria, i.e., $A=\left[\begin{array}{cc}1& 4\\ 4 &1\\ 2 &3\end{array}\right]$.}. Our results carry over even
if the game has uncountably many Nash equilibria.

Conceptually, our paper is based on one key idea. \textit{Equilibria (fixed-points) of a dynamical system may be unstable.}
An example of an unstable equilibrium is an ideal coin that lies on its edge. This is a fixed point/equilibrium of the dynamical system (on paper), but it has no predictive value in terms of actual system behavior.
It corresponds to a probability zero event in the sense that for the coin to land on its edge the set of  allowable starting conditions is negligible. Even if we try to place an idea coin on its edge, the inherent instability of the state will amplify even the most minute of disturbances fast and cause it to topple  and land on one of its two stable equilibria, either heads or tails. If we think of this knife edge equilibrium  as encoding a mixed state (symmetric 50\% heads 50\% tails), in the resulting stable states this symmetry has collapsed and we end up with a pure state. This  high level intuition captures the essence  behind our theorem. This mixability/symmetry breaking argument is \textit{universal} in the sense that it holds for all mixed states.

Technically, our paper is based mostly on two prior works. In \cite{Kleinberg09multiplicativeupdates} the generic instability of mixed Nash  was established for other variants of MWUA, including the replicator equation.
Our instability analysis follows along similar lines. Any linearly stable equilibrium is shown to be a weakly stable Nash equilibrium\footnote{This Nash equilibrium refinement was introduced in \cite{Kleinberg09multiplicativeupdates}. A weakly stable Nash is a Nash equilibrium that satisfies the extra property that if you force any single randomizing agent to play any strategy in his current support with probability one, all other agents remain indifferent between the strategies in their support.}. This is a strong refinement of the Nash equilibrium property and in two agent coordination games under our genericity assumption it coincides with the notion of pure Nash.
Since
 mixed equilibria are linearly unstable  by applying the center stable manifold theorem we establish that locally the set of initial conditions that converge to such an equilibrium is of measure zero. To translate this to a global statement about the size of the region of attraction technical smoothness conditions must be established about the discrete time map. For continuous time systems, such as the replicator \cite{Kleinberg09multiplicativeupdates}, these are standard. \cite{Kleinberg09multiplicativeupdates} also studies other discrete versions of replicator, in fact MWUA in its standard form, but there noise is injected into the system to simplify the stability analysis. Our analysis  does not require any additive noise. Also, our system is deterministic, implying a stronger convergence result.

 In the case of coordination games with isolated equilibria our theorem follows by combining the zero measure regions of attraction of all unstable equilibria via union bound arguments.
The case of uncountably infinite equilibria is tricky and requires specialized arguments. Intuitively the problem lies on the fact that
a) black box union bound arguments do not suffice, b) the standard convergence results in potential games merely imply convergence to
equilibrium sets, \textit{i.e.}, the distance of the state from the set of equilibria goes to zero, instead of the stronger point-wise
convergence, \textit{i.e.}, every trajectory has a unique (equilibrium) limit point. Set-wise convergence allows for complicated
non-local trajectories that weave infinitely often  in and out of the neighborhood of an equilibrium making topological arguments hard.
Once point-wise convergence has been established, the continuum of equilibria can be chopped down into countable many pieces via
Lindel\H{o}f's lemma and once again standard union bound arguments suffice. Nagylaki \etal pointwise convergence result
\cite{Nagylaki2} does not apply here, because their hyperbolicity assumption is not satisfied. Further, assuming $s\rightarrow 0$, they
analyze a continuous time dynamical system governed by a differential equation. Unlike Nagylaki the system we analyze is discrete MWUA,
and establish point-wise convergence to pure Nash equilibria almost always following the work of Losert and Akin \cite{akin}, even if
hyperbolicity is not satisfied (uncountably many equilibria).
%

We close our paper with some technical observations about the speed of divergence from the set of unstable
equilibria as well as discussing an average case analysis approach for predicting the probabilities that we converge to any
of the pure equilibria given a random initial condition.
We believe that these observations could stimulate future work in
the area.
\smallskip

\noindent\textbf{Biological Interpretation.}
Our work sheds new light on the role of natural selection in haploid genetics. We show that
 natural selection acts as an antagonistic process to the preservation of genetic diversity. The long term preservation of
 genetic diversity needs to be safeguarded by evolutionary mechanisms which are orthogonal to natural selection such as mutations and
 speciation.  This view, although may appear linguistically puzzling at first, is completely compatible with the mixability
 interpretation of
\cite{PNAS1:Livnat16122008,PNAS2:Chastain16062014}.  Mixability  implies that good ``mixer'' alleles (\textit{i.e.}, alleles that enjoy high fitness in the current genotypic
landscape) gain an evolutionary advantage over their competition. On the other hand, the preservation of mixed populations relies on
this evolutionary race between alleles having no clear long term winner with the average-over-time mixability  of two, or more, alleles
being roughly equal\footnote{In game theoretic terms, in order for two strategies to be played with positive probability by the same agent in the long run, it must be the case that the time-average expected utilities of these two strategies are roughly equal. The time average here is over the history of play so far.}. As with actual races, ties are rare and hence mixability leads to non-mixed populations in the long run.

According to recent PNAS commentary \cite{barton2014diverse}  some of the points in \cite{PNAS2:Chastain16062014}  raised questions
when compared against commonly held  beliefs  in mathematical biology.

\begin{quote}
``Chastain et al. (1) suggest that the representation of selection as (partially) maximizing entropy may help us understand how
selection maintains diversity. However, it is widely believed that selection on haploids (the relevant case here) cannot maintain a
stable polymorphic equilibrium. There seems to be no formal proof of this in the population genetic literature\dots'' \end{quote}

\noindent
Our argument above helps bridge this gap between belief and theory.



\section{Related work}

\medskip

\noindent\textbf{MWUA, Genetics, Ecology and Biology.} The earliest connection, to our knowledge, between MWUA and genetics lies in \cite{Kleinberg09multiplicativeupdates}, where such a connection is established between MWUA (in its usual exponential form)  and replicator dynamics \cite{Taylor1978145,Schuster1983533}, one of the most basic tools in mathematical ecology, genetics, and mathematical theory of selection and evolution.  Specifically, MWUA is up to first order approximation equivalent to replicator dynamics. Since the MWUA variant examined in \cite{PNAS2:Chastain16062014} is an approximation of its standard exponential form, these results follow a unified theme. MWUA in its classic form is up to first order approximation equivalent to models of evolution. The MWUA variant examined in  \cite{PNAS2:Chastain16062014} was introduced by Losert and Akin in \cite{akin} in a paper that also brings biology and game theory together. Specifically, they use game theoretic analysis to prove the first point-wise convergence to equilibria for a class of evolutionary dynamics resolving an open question at the time. We build on the techniques of this paper, while also exploiting the (in)stability analysis of mixed equilibria along the lines of \cite{Kleinberg09multiplicativeupdates}.
The connection between MWUA and replicator dynamics by \cite{Kleinberg09multiplicativeupdates}  also immediately implies connections between MWUA and mathematical ecology. This is because replicator dynamics is known to be equivalent (up to a diffeomorphism) to the classic prey/predator population models of Lotka-Voltera \cite{Hofbauer98}.
\medskip

\noindent\textbf{MWUA (and variants) in game theory.}
As a result of the discrete nature of MWUA, its game theoretic analysis tends to be trickier than that of its
continuous time variant, the replicator. Analyzed settings of this family of dynamics include zero-sum games \cite{Akin84,Sato02042002}, potential (congestion) games \cite{Kleinberg09multiplicativeupdates}, games with non-converging behavior
 \cite{paperics11,NonConvergingDask,RePEc438,Balcan12,sigecom11} and as well as families of network games \cite{Soda14,PiliourasAAMAS2014}.
New techniques can predict analytically the limit point of replicator systems starting from randomly chosen initial condition. This approach is referred to as
 average case analysis of game dynamics \cite{Soda15a}.
\medskip

\noindent\textbf{Genetics and Computer Science.} In the last couple of years we have witnessed an accumulation of papers and   problem
proposals in the intersection of computer science and genetics
\cite{evolfocs14,Vishnoi:2013:MER:2422436.2422445,VishnoiOpen,VishnoiSpeed,DSV}. In the
closing sections of our paper, we add to this exciting discussion by pointing out some new challenges along these lines.

\section{Preliminaries}
In this section we formally describe the two player coordination games, the dynamics under consideration, and its equivalence with
MWUA in evolution. First we start with some notations.
\medskip

\noindent{\bf Notations:}  All vectors are in bold-face letters, and are considered as column vectors. To denote a row
vector we use $\xx^T$. The $i^{th}$ coordinate of $\xx$ is denoted by $x_i$, and for $l<k$, $\xx(l:k)$ denote subvector
$(x_l,x_{l+1},\dots,x_k)$. For two vectors $\xx,\yy$ let $(\xx;\yy)$ denote the concatenation of two vectors. For $k\in \Real$,
$\kk_{n\times n}$ represents $n\times n$ matrix with all entries set to $k$. We denote set $\{1,\dots,n\}$ by $[1:n]$.
$int \; A$ is the interior of set $A$.


\subsection{Two-player Games and Nash equilibrium}
In this paper we consider two-player games, where each player has finitely many pure strategies (moves).
Let $S_i,\ i=1,2$ be the set of strategies for player $i$, and let $m\defeq|S_1|$ and $n\defeq|S_2|$.  Then such a game can be
represented by two payoff matrices $A$ and $B$ of dimension $m\times n$, where payoff to the players are $A_{ij}$ and $B_{ij}$
respectively if the first-player plays $i$ and the second plays $j$.

Players may randomize among their strategies. 
The set of mixed strategies for the first player is $\Delta_1=\{\xx=(x_1,\dots,x_m)\ |\ \xx\ge 0, \sum_{i=1}^m x_i=1\}$, and for the
second player is $\Delta_2=\{\yy=(y_1,\dots, y_n)\ |\ \yy\ge 0, \sum_{j=1}^n y_j=1\}$.
The expected payoffs of the first-player and second-player from a mixed-strategy $(\xx,\yy)\in \D_1\times \D_2$ are, respectively
\[ \sum_{i,j} A_{ij} x_i y_j=\xx^TA\yy\ \ \ \ \mbox{ and }\ \ \ \ \sum_{i,j}B_{ij}x_iy_j = \xx^TB\yy\]

\begin{definition}{(Nash Equilibrium \cite{agt.bimatrix})}
A strategy profile is said to be a Nash equilibrium strategy profile (NESP) if no player achieves a better payoff by a
unilateral deviation \cite{nash}. Formally, $(\xx,\yy) \in \D_m\times \D_n$ is a NESP iff $\forall \xx' \in \D_m,\
\xx^TA\yy \geq \xx'^TA\yy$ and $\forall \yy' \in \D_n,\  \xx^TB\yy \geq \xx^TB\yy'$.
\end{definition}

Given strategy $\yy$ for the second-player, the first-player gets $(A\yy)_k$ from her $k^{th}$ strategy. Clearly, her best
strategies are $\argmax_k (A\yy)_k$, and a mixed strategy fetches the maximum payoff only if she randomizes among her best
strategies. Similarly, given $\xx$ for the first-player, the second-player gets $(\xx^TB)_k$ from $k^{th}$ strategy, and same
conclusion applies. These can be equivalently stated as the following complementarity type conditions,

\begin{equation}\label{eq.ne}
\begin{array}{ll}
\forall i \in S_1,\hspace{.06in} x_i>0 \ \ \Rightarrow\ \ & (A\yy)_i =\max_{k \in S_1} (A\yy)_k\\
\forall j \in S_2,\hspace{.06in} y_j>0 \ \ \Rightarrow & (\xx^TB)_j = \max_{k \in S_2} (\xx^TB)_k
\end{array}
\end{equation}

\noindent{\bf Symmetric Game.} Game $(A,B)$ is said to be symmetric if $B=A^T$. In a symmetric game the strategy sets of both the
players are identical, i.e., $m=n$, and $S_1=S_2$. We will use $n$, $S$ and $\D_n$ to denote number of strategies, the
strategy set and the mixed strategy set respectively of the players in such a game. A Nash equilibrium profile $(\xx,\yy)\in \D_n\times
\D_n$ is called {\em symmetric} if $\xx=\yy$.  Note that at a symmetric strategy profile $(\xx,\xx)$ both the players get payoff
$\xx^TA\xx$. Using (\ref{eq.ne}) it follows that $\xx \in X$ is a symmetric NE of game $(A,A^T)$, with payoff $\pi$ to both players, if
and only if,

\begin{equation}\label{eq.sne}
\forall i \in S, x_i>0 \Rightarrow (A\xx)_i = \max_k (A\xx)_k
\end{equation}

\noindent{\bf Coordination Game.} In a coordination game $B=A$, i.e., both the players get the same payoff regardless of who is playing
what. Note that such a game always has a pure equilibrium, namely $\argmax_{(i,j)} A_{ij}$.

\subsection{Discrete Dynamical Systems}
A dynamical system is a smooth action of the reals or the integers on another object (usually a manifold). When the integers are
acting, the system is called discrete and is given by the following update rule:$$\textbf{x}
(n+1) = f(\textbf{x}(n))$$ with $n \in
\mathbb{N} \textrm{ or } \mathbb{Z}$ where $f$ is called the rule/map of the dynamic. A point $x_{*}$ is called \textit{fixed point} or
\textit{equilibrium} if $f(\textbf{x}_{*}) = \textbf{x}_{*}$. A trajectory of the dynamical system is a (infinite) sequence
of vectors $\textbf{x}(0),f(\textbf{x}(0)),f^2(\textbf{x}(0)),...$ where $f^n$ is the composition of $f$ for $n$ times.
Dynamical system theory is the branch of mathematics that tries to understand the behavior of dynamical systems. To
understand their behavior, there are plenty of questions one needs to answer. Does the system converge? What is the rate of
convergence? Which are the stable fixed points?
\medskip
\medskip


\noindent{\bf Games and discrete MWUA of \cite{PNAS2:Chastain16062014}.}
Chastain et. al. \cite{PNAS2:Chastain16062014} observed that the update rule derived by Nagylaki \cite{Nagylaki1} for allele
frequencies, during evolutionary process under weak-selection, is exactly multiplicative weight update
algorithm (MWUA) applied on coordination game, where genes are players and alleles are their strategies.
Formally, if fitness values of a genome defined by a combination of alleles (strategy profile) is from
$[1-s,\ 1+s]$ for a small $s>0$ (weak selection), then for the two-gene (two-player) case such a fitness matrix can be written as
$B=\ones_{m\times n} + \eps C$, where each $C_{ij} \in \mathbb Z$ and $\eps<<1$. This, defines a coordination game $(B,B)$.
Further, the change in allele frequencies in each new generation is as per the following rule:
\begin{equation}\label{eq.evol}
\forall i\in S_1,\ x_i(t+1) = \frac{x_i(t) (1+\eps (C\yy(t))_i)}{1+\eps \xx(t)^TC\yy(t)};\ \ \ \forall j \in S_2,\ y_j(t+1) = \frac{y_j(t) (1+\eps (C^T\xx(t))_j)}{1+\eps
\xx(t)^TC\yy(t)}
\end{equation}
Using the fact that $B=\ones_{m\times n} + \eps C$, this can be reformulated as,

\[
\frac{x_i(t) (1+\eps (C\yy(t))_i)}{1+\eps \xx(t)^TC\yy(t)} = x_i(t) \frac{(B\yy(t))_i}{\xx^T(t)B\yy(t)};\ \ \ \frac{y_j(t) (1+\eps
(C^T\xx(t))_j)}{1+\eps \xx(t)^TC\yy(t)}=y_j(t) \frac{(B^T\xx(t))_j}{\xx^T(t)B\yy(t)} \]

Thus, in this paper we study convergence of discrete MWUA through this reformulation.This reformulation is also known as {\em discrete
replicator dynamics}. In general, given a game $(A,B)$ consider the update rule (map) $f:\D_m\times \D_n
\rightarrow \D_m\times \D_n$,

\begin{equation}\label{maindy}
\mbox{For $(\xx,\yy) \in \D_m\times \D_n$ \ \ \ if $(\xx',\yy')=f(\xx,\yy)$,\ \ \  then}\ \ \
\begin{array}{ll}
\forall i \in S_1,& x'_i = x_i\frac{(A\yy)_i}{\xx^TA\yy}\\
\forall j \in S_2,& y'_j = y_j\frac{(\xx^TB)_j}{\xx^TB\yy}
\end{array}
\end{equation}

Clearly, $\xx'\in \D_m,\yy'\in \D_n$, and therefore $f$ is well-defined. Starting with $(\xx(0),\yy(0))$, the strategy profile at time
$t\ge 1$ is $(\xx(t),\yy(t))=f(\xx(t-1),\yy(t-1))=f^t(\xx(0),\yy(0))$.
\medskip
\medskip

\noindent{\bf Losert and Akin \cite{akin}.} Losert and Akin showed a very interesting result on the convergence of $\dmwa$ 
when applied on evolutionary games \cite{HistoryofEvolutionIdea} with positive matrix. These games are symmetric games, where pure strategies are
species and the player is playing against itself, i.e., symmetric strategy ($\xx=\yy$).
Consider a symmetric game $(A,A^T)$ where $A$ is an $n \times n$ positive matrix, and the following dynamics
starting with $\zz(0) \in \Delta_n$.
\begin{equation}\label{onedynamic}
z_{i}(t+1) = z_{i}(t) \frac{(A\zz(t))_{i}}{\zz(t)^TA\zz(t)}
\end{equation}

Clearly, $\zz(t+1)\in \D_n,\ \forall t\ge 1$. Thus, there is a map $f_s:\D_n\rightarrow \D_n$ corresponding to the above dynamics, where
if $\zz'=f_s(\zz)$ then $z'_i = z_i \frac{(A\zz)_{i}}{\zz^TA\zz}$, implying
\begin{equation}\label{eq.fs}
\zz(t+1) = f_s(\zz(t))=f^{t+1}_s(\zz(0))
\end{equation}

If $\zz(t)$ is a fixed-point of $f_s$ then $\zz(t')=\zz(t),\ \forall t'\ge t$.
Losert and Akin \cite{akin} proved that the above dynamical system converges pointwise to
fixed-point, and that map $f$ is a diffeomorphism in an open set that contains $\Delta_{n}$. Formally:

\begin{thm}\label{losert} \cite{akin} Let $\{\zz(t)\}$ be an orbit for the dynamic of (\ref{onedynamic}). As $t$ approaches $\infty$,
$\zz(t)$ converges to a unique fixed-point $\textbf{q}$. 
Additionally, the map $f_s$ corresponding to (\ref{onedynamic}) is a
diffeomorphism, i.e. it is a one-to-one, onto, and smooth function whose inverse function is also smooth.  \end{thm}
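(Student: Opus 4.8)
The plan is to split the statement into its two assertions---pointwise convergence to a unique fixed point, and the diffeomorphism property of $f_s$---and to prove them in tandem, since the second feeds into the first. The backbone of the convergence argument is a strict Lyapunov function. I would first observe that the mean fitness $\bar W(\zz) = \zz^T A \zz$ is, because $A$ is symmetric and positive, a homogeneous degree-two polynomial with positive coefficients satisfying $\partial \bar W/\partial z_i = 2(A\zz)_i$; hence the map $f_s$ of (\ref{onedynamic}) is exactly the Baum--Eagon growth transformation associated with $\bar W$. The Baum--Eagon inequality then yields $\bar W(f_s(\zz)) \geq \bar W(\zz)$ with equality if and only if $\zz$ is a fixed point. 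Since $\Delta_n$ is compact and $\bar W$ is bounded, the increasing sequence $\bar W(\zz(t))$ converges, so the $\omega$-limit set $\omega(\zz(0))$ is nonempty, compact, connected, and invariant, and $\bar W$ is constant on it; by the equality case every point of $\omega(\zz(0))$ is therefore a fixed point.

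Next I would establish the diffeomorphism claim. Smoothness of $f_s$ on an open neighborhood $U$ of $\Delta_n$ in the affine hyperplane $\{\sum_i z_i = 1\}$ is immediate, since $\zz^T A \zz$ stays bounded away from zero on such a neighborhood when all entries of $A$ are positive. Local invertibility reduces to showing that the Jacobian of $f_s$, written in $n-1$ free coordinates, is nonsingular throughout $U$; I would compute this Jacobian explicitly and verify that its determinant keeps a constant sign. The substantive step is \emph{global injectivity}. Here I would exploit the variational characterization $f_s(\zz) = \argmax_{\vv \in \Delta_n} \sum_i z_i (A\zz)_i \log v_i$ (which also re-derives the monotonicity above) together with a monotonicity/uniqueness argument, or equivalently argue that $f_s$ is a proper local diffeomorphism with connected fibers. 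Injectivity combined with the nonsingular Jacobian then gives that $f_s$ is a diffeomorphism onto its image with smooth inverse.

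Finally I would upgrade set-convergence to pointwise convergence, which is the crux given that the equilibria need not be isolated: a priori $\omega(\zz(0))$ could be a nondegenerate continuum of fixed points. I would close this gap using that $\bar W$ is real-analytic. A {\L}ojasiewicz-type inequality near each $\qq \in \omega(\zz(0))$, combined with a surplus estimate of the form $\bar W(\zz(t+1)) - \bar W(\zz(t)) \geq c\,\norm{\zz(t+1)-\zz(t)}^2$ for some $c>0$ (obtainable from the strict concavity of $\log$ underlying Baum--Eagon, via a Pinsker-type bound on the KL surplus between consecutive iterates), forces the orbit to have finite length and hence to converge to a single point $\qq$. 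The diffeomorphism property enters precisely here, guaranteeing that the linearization at fixed points is invertible and ruling out the pathology of the orbit drifting tangentially along the equilibrium set without ever settling.

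I expect the main obstacle to be global injectivity of $f_s$ and, relatedly, obtaining the surplus and Jacobian estimates \emph{uniformly up to the boundary} of $\Delta_n$. Near a face where some $z_i \to 0$ the map degenerates: the Baum--Eagon surplus can shrink and the Jacobian can approach the boundary of nonsingularity, so controlling these bounds uniformly---rather than only on the interior---is delicate. This boundary analysis is exactly the technical heart of the Losert--Akin argument, and it is what makes the diffeomorphism conclusion hold on an open set genuinely \emph{containing} $\Delta_n$ rather than merely on its interior.
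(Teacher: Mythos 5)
First, a point of comparison: the paper does not prove this statement at all. Theorem~\ref{losert} is imported verbatim from Losert and Akin \cite{akin}, so there is no in-paper proof to match your sketch against; the relevant benchmark is the Losert--Akin argument itself. Your opening step is correct and coincides with theirs: $f_s$ is the Baum--Eagon growth transformation of the mean fitness $\bar W(\zz)=\zz^TA\zz$, so $\bar W$ strictly increases off fixed points and a LaSalle argument places the $\omega$-limit set inside the set of fixed points. After that, however, your sketch has genuine gaps at exactly the two places you identify as ``the substantive step'' and ``the crux.'' (a) Global injectivity: the variational characterization $f_s(\zz)=\argmax_{\vv}\sum_i z_i(A\zz)_i\log v_i$ describes the image point but says nothing about injectivity in $\zz$, and ``proper local diffeomorphism with connected fibers'' is asserted, not established (properness and nonvanishing of the Jacobian determinant up to the boundary are precisely what must be proved). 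This is the technical core of Losert--Akin's diffeomorphism theorem and cannot be waved through. (b) Pointwise convergence: the {\L}ojasiewicz finite-length machinery for discrete ascent needs, in addition to the surplus bound $\bar W(\zz(t+1))-\bar W(\zz(t))\geq c\|\zz(t+1)-\zz(t)\|^2$, a relative-error (angle) condition of the form $\|\zz(t+1)-\zz(t)\|\geq b\,\|\nabla\bar W(\zz(t))\|$. For the replicator map the step $z_i(t+1)-z_i(t)=z_i(t)((A\zz)_i-\bar W)/\bar W$ is damped by the factor $z_i(t)$, so this condition fails near the boundary of $\Delta_n$ --- which is exactly where the limit points of interest (vertices and faces) live. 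Losert and Akin avoid this by a different device: for each $\omega$-limit point $\qq$ they use the local relative-entropy function $\sum_i q_i\log z_i$ and show it is eventually almost monotone along the orbit, which traps the orbit at $\qq$.

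One further claim in your last paragraph is wrong as stated: invertibility of $Df_s$ at fixed points does not ``rule out the orbit drifting tangentially along the equilibrium set.'' Along a continuum of equilibria the linearization necessarily has eigenvalue $1$ in the tangential directions, and a nonsingular Jacobian is perfectly compatible with that; the diffeomorphism property plays no such role in the convergence proof (in the present paper it is used for an entirely different purpose, namely to invoke the center-stable manifold theorem in Appendix~\ref{a:zero}).
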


\subsection{Terms used in biology}\label{sec:prelEvol}
We provide brief non-technical definitions of a few biological terms that we use in this paper.

\noindent{\bf Gene.}
A unit that determines some characteristic of the organism, and passes traits to offsprings.
All organisms have genes corresponding to various biological traits, some of which are instantly visible, such as eye color or number
of limbs, and some of which are not, such as blood type.
\medskip
\medskip

\noindent{\bf Allele.}
Allele is one of a number of alternative forms of the same gene, found at the same place on a chromosome,
Different alleles can result in different observable traits, such as different pigmentation.
\medskip
\medskip

\noindent{\bf Genotype.}
The genetic constitution of an individual organism.
\medskip
\medskip

\noindent{\bf Phenotype.}
The set of observable characteristics of an individual resulting from the interaction of its genotype with the environment.
\medskip
\medskip


\noindent{\bf Diploid.}
Diploid means having two copies of each chromosome. Almost all of the cells in the human body are diploid.
\medskip
\medskip

\noindent{\bf Haploid.}
A cell or nucleus having a single set of unpaired chromosomes.
Our sex cells (sperm and eggs) are haploid cells that are produced by meiosis. When sex cells unite during fertilization, the haploid
cells become a diploid cell.
\section{Pointwise Convergence and Diffeomorphism}\label{sec:conv}
In this section we show that the $\edmwa$ of (\ref{maindy}), 
when applied to a two-player coordination game $(B,B)$,  converges pointwise to a fixed-point of $f$ under weak selection.
Further, map $f$ is diffeomorphism. Essentially we will reduce the problem to applying $\dmwa$ 
on symmetric game with positive matrix and then use the result of Losert and Akin \cite{akin} (Theorem \ref{losert}).


Under weak selection regime we have $B_{ij} \in [1-s, 1+s], \ \forall (i,j)$, for some $s<1$. Let
 $\epsilon<1-s$, and consider the following matrix
\begin{equation}\label{eq.a}
A = \left[\begin{array}{cc} \eeps_{m\times m} & B-\eps \\ B^T-\eps & \eeps_{n \times n}\end{array}\right]
\end{equation}

%
%

We will show that applying dynamics of (\ref{maindy}) on game $(B, B^T)$ starting at $(\xx(0),\yy(0))$ is same as applying
(\ref{onedynamic}) on game $(A, A^T)$ starting at $\zz(0)=(\frac{\xx(0)}{2};\frac{\yy(0)}{2})$.

\begin{lem}\label{le:equivDy}
Given $(\xx(0),\yy(0))\in \D_1,\D_2$, let $\zz(0)=(\frac{\xx(0)}{2};\frac{\yy(0)}{2})$, then $\forall t \ge0$, $(\xx(t);\yy(t)) =
2*\zz(t)$, where $\xx(t)$ and $\yy(t)$ are as per (\ref{maindy}) and $\zz(t)$ is as per (\ref{onedynamic}).
\end{lem}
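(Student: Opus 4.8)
The plan is to prove the identity $(\xx(t);\yy(t)) = 2\,\zz(t)$ by induction on $t$, the entire content being a blockwise computation of $A\zz$ that exploits the way the shift $-\eps$ in the off-diagonal blocks of $A$ cancels against the all-$\eps$ diagonal blocks. For the base case, $\zz(0)=(\xx(0)/2;\yy(0)/2)$ lies in $\D_{m+n}$ since its two halves sum to $\tfrac12\sum_i x_i(0)+\tfrac12\sum_j y_j(0)=\tfrac12+\tfrac12=1$, and the identity holds at $t=0$ by the definition of $\zz(0)$.

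For the inductive step I would assume $\zz(t)=(\xx(t)/2;\yy(t)/2)$ with $\xx(t)\in\D_m$, $\yy(t)\in\D_n$ (simplex membership is preserved by the updates, as already noted for the map of (\ref{maindy}) and for (\ref{onedynamic})) and compute $A\zz(t)$ one block at a time using the matrix $A$ of (\ref{eq.a}). For a top coordinate $i\in[1:m]$ the diagonal block contributes $\tfrac{\eps}{2}\sum_k x_k=\tfrac{\eps}{2}$ and the off-diagonal block contributes $\tfrac12(B\yy)_i-\tfrac{\eps}{2}\sum_j y_j=\tfrac12(B\yy)_i-\tfrac{\eps}{2}$, so the two $\tfrac{\eps}{2}$ terms cancel and $(A\zz)_i=\tfrac12(B\yy)_i$. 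The mirror-image computation on the bottom block gives $(A\zz)_{m+j}=\tfrac12(B^T\xx)_j$ for $j\in[1:n]$, so altogether $A\zz=\tfrac12\,(B\yy;\,B^T\xx)$. This cancellation is the crux of the argument: the matrix $A$ in (\ref{eq.a}) is engineered precisely so that, on vectors whose two halves are probability vectors, the additive $\eps$-terms vanish and $A$ reproduces (up to the factor $\tfrac12$) the coordination-game payoff vectors $B\yy$ and $B^T\xx$.

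Next I would compute the normalizer, $\zz^TA\zz=\sum_i\tfrac{x_i}{2}\cdot\tfrac12(B\yy)_i+\sum_j\tfrac{y_j}{2}\cdot\tfrac12(B^T\xx)_j=\tfrac14\xx^TB\yy+\tfrac14\yy^TB^T\xx=\tfrac12\xx^TB\yy$, using that the scalar $\yy^TB^T\xx$ equals $\xx^TB\yy$. Substituting into the one-population update (\ref{onedynamic}), a top coordinate satisfies $z_i(t+1)=\tfrac{x_i}{2}\cdot\frac{\frac12(B\yy)_i}{\frac12\xx^TB\yy}=\tfrac12\,x_i\frac{(B\yy)_i}{\xx^TB\yy}=\tfrac12\,x_i(t+1)$ by the coordination-game form of (\ref{maindy}), and identically $z_{m+j}(t+1)=\tfrac12\,y_j(t+1)$ on the bottom block. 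This is exactly $\zz(t+1)=(\xx(t+1)/2;\yy(t+1)/2)$, closing the induction.

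I do not expect a genuine obstacle; the only points requiring care are bookkeeping. First, one must keep track that $\sum_i x_i(t)=\sum_j y_j(t)=1$ at every step, since this is what drives the $\eps$-cancellation, and it holds because (\ref{maindy}) maps $\D_m\times\D_n$ into itself. Second, all denominators must be nonzero: since $B_{ij}\in[1-s,1+s]$ with $s<1$ the matrix $B$ is positive, whence $\xx^TB\yy>0$ and $\zz^TA\zz>0$, so the ratios in (\ref{maindy}) and (\ref{onedynamic}) are well defined throughout. With these in hand the induction goes through cleanly.
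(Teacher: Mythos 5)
Your proof is correct and follows essentially the same route as the paper's: an induction on $t$ in which the blockwise computation of $A\zz(t)$ shows the $\eps$ terms cancel (using $\sum_i x_i(t)=\sum_j y_j(t)=1$), yielding $(A\zz)_i=\tfrac12(B\yy)_i$ and $\zz^TA\zz=\tfrac12\xx^TB\yy$, after which the one-population update reduces to the two-player update divided by two. Your write-up is, if anything, slightly more explicit than the paper's about the cancellation and the positivity of the denominators.
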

\begin{proof}
We will show the result by induction. By hypothesis the base case of $t=0$ holds. Suppose, it holds up to time $t$, then
let $\xx=\xx(t+1)$, $\yy=\yy(t+1)$ and $(\xx';\yy')=\zz(t+1)$. Now, $\forall i \le m+n,\ z_i(t+1) = z_i(t)
\frac{(A\zz(t))_i}{\zz(t)^TA\zz(t)}$ together with $\zz(t)=\frac{1}{2}(\xx(t);\yy(t))$ gives us
\[
\forall i \le m, x'_i = \frac{x_i(t)}{2}\frac{\frac{\eps\sum_i x_i(t)}{2} + \frac{(B\yy(t))_i}{2} - \frac{\eps\sum_j
y_j(t)}{2}}{\frac{\xx(t)^TB\yy(t)}{4} + \frac{\yy(t)B^T\xx(t)}{4}} = \frac{2x_i(t)}{4} \frac{(B\yy(t))_i}{\xx(t)^TB\yy(t)} =
\frac{x_i}{2}
\]

Similarly, we can show that $\forall j\le n,\ y'_j =\frac{y_j}{2}$, and the lemma follows.
\end{proof}

Lemmas 
\ref{le:equivDy} establishes equivalence between games $(B,B)$ and $(A,A^T)$ in terms of 
dynamics, and thus the next theorem follows using Theorem \ref{losert}.

\begin{thm}\label{convergence}
Let $\{\textbf{x}(t),\textbf{y}(t)\}$ be an orbit for the dynamic of (\ref{maindy}). As $t$ approaches $\infty$,
$(\textbf{x}(t),\textbf{y}(t))$ converges to a unique fixed-point $(\pp, \qq)$. 
Additionally, the map $F$ corresponding to (\ref{maindy}) is a diffeomorphism, i.e. it is a one-to-one, onto, smooth function whose
inverse function is also smooth.
\end{thm}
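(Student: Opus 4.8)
The plan is to transport the Losert--Akin theorem (Theorem~\ref{losert}) from the symmetric game $(A,A^{T})$ to the dynamic (\ref{maindy}) through the change of variables already set up in Lemma~\ref{le:equivDy}. First I would record that the matrix $A$ of (\ref{eq.a}) is entrywise positive: its two diagonal blocks equal $\eps>0$, while each off-diagonal entry is $B_{ij}-\eps\ge(1-s)-\eps>0$ by the choice $\eps<1-s$ together with the weak-selection bound $B_{ij}\in[1-s,1+s]$. Hence Theorem~\ref{losert} applies to (\ref{onedynamic}) in dimension $m+n$, giving that every orbit $\{\zz(t)\}$ converges to a unique fixed point $\mathbf{q}$ and that $f_s$ is a diffeomorphism of an open set containing $\Delta_{m+n}$.

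The convergence claim then follows immediately from Lemma~\ref{le:equivDy}: since $(\xx(t);\yy(t))=2\zz(t)$ for all $t$, the orbit of (\ref{maindy}) converges to $2\mathbf{q}$, and writing $2\mathbf{q}=(\pp;\qq)$ identifies the limit $(\pp,\qq)\in\D_m\times\D_n$, whose uniqueness is inherited from that of $\mathbf{q}$.

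The substantive part is the diffeomorphism claim for $F$, which I would obtain by realizing $F$ as a conjugate of $f_s$. Let $\psi(\xx,\yy)=\tfrac12(\xx;\yy)$, an affine isomorphism from $\D_m\times\D_n$ onto the slice $W=\{(\mathbf{u};\mathbf{v}):\mathbf{u},\mathbf{v}\ge 0,\ \sum_i u_i=\sum_j v_j=\tfrac12\}\subseteq\Delta_{m+n}$. Lemma~\ref{le:equivDy} is precisely the identity $f_s\circ\psi=\psi\circ F$, so on $\D_m\times\D_n$ we have $F=\psi^{-1}\circ f_s\circ\psi$. Since $\psi$ is an affine diffeomorphism onto $W$, the map $F$ will be a diffeomorphism as soon as the restriction $f_s|_W:W\to W$ is one.

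Here lies the main obstacle. Theorem~\ref{losert} gives a diffeomorphism of a full-dimensional neighborhood of $\Delta_{m+n}$, but $W$ is a lower-dimensional invariant slice, so to transfer the property I must show that $f_s$ maps $W$ \emph{onto} $W$ (guaranteeing that $\psi^{-1}\circ f_s$ is defined on $W$ and that $f_s|_W$ is invertible with smooth inverse $f_s^{-1}|_W$). I would establish this by tracking the imbalance coordinate $t(\zz)=\sum_{i\le m}z_i-\sum_{j>m}z_j$. Writing $\zz=(\mathbf{u};\mathbf{v})$ and using the block form of $A$, a direct computation of one step of (\ref{onedynamic}) yields the scalar update
\[
t(f_s(\zz))=\frac{\eps\,t(\zz)}{2\,\mathbf{u}^{T}B\mathbf{v}+\eps\,t(\zz)^{2}},
\]
whose denominator is strictly positive. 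Thus $t(f_s(\zz))=0$ if and only if $t(\zz)=0$, i.e. $f_s(\zz)\in W\iff\zz\in W$. Combined with Lemma~\ref{le:equivDy}, which already gives $f_s(W)\subseteq W$, this yields $f_s(W)=W$ and $f_s^{-1}(W)=W$, so $f_s|_W$ is a diffeomorphism of $W$. Composing, $F=\psi^{-1}\circ f_s|_W\circ\psi$ is smooth, bijective, and has smooth inverse $\psi^{-1}\circ f_s^{-1}|_W\circ\psi$, which completes the proof.
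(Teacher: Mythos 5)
Your proposal is correct and follows essentially the same route as the paper: reduce to the symmetric positive-matrix game $(A,A^{T})$ via the embedding of Lemma~\ref{le:equivDy} and invoke Theorem~\ref{losert}. The paper states the reduction in one line and leaves implicit both the positivity check on $A$ and the transfer of the diffeomorphism property to the invariant slice $\sum_i u_i=\sum_j v_j=\tfrac12$; your explicit computation of the imbalance update $t(f_s(\zz))=\eps\,t(\zz)/(2\mathbf{u}^{T}B\mathbf{v}+\eps\,t(\zz)^{2})$ is a correct and worthwhile filling-in of that omitted detail.
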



\section{Convergence to Pure NE Almost Always}\label{sec:pureNE}
In Section \ref{sec:conv} we saw that dynamics of (\ref{maindy}) converges to a fixed point regardless of where we start in coordination games
with weak-selection. However, which equilibrium it converges to depends on the starting point. In this section we show that it
almost always converge to a pure Nash equilibrium under mild genericity assumptions on the game matrix. In the light of
the known fact that a coordination game $(B,B)$, where $B_{ij}$s are chosen uniformly at random from $[1-s, 1+s]$, may have
exponentially many mixed NE \cite{Arxiv:DBLP:journals/corr/abs-1208-3160,ITCS:DBLP:dblp_conf/innovations/ChastainLPV13}, this result comes as a surprise.

To show the result, we use the concept of {\em weakly stable} Nash equilibrium  \cite{Kleinberg09multiplicativeupdates}. This is a refinement of the classic notion of equilibrium
and  we show that for coordination games it coincides with pure NE under some mild assumptions.
Further, we connect them to {\em stable} fixed-points of $f$ (\ref{maindy}) by showing that all stable fixed points of $f$ are weakly stable Nash equilibria.
Finally, using the {\em Center Stable Manifold Theorem}
\cite{shub} we show that dynamics defined by $f$ converges to {\em stable} fixed-points except for a zero-measure set of starting points.

\begin{mydef} \cite{Kleinberg09multiplicativeupdates}A Nash equilibrium $(\xx,\yy)$ is called {\em weakly stable} if fixing one of the players to choosing a pure strategy in the
support of her strategy with probability one, leaves the other player indifferent between the strategies in his support, e.g., let
$T_1$ and $T_2$ are supports of $\xx$ and $\yy$ respectively, then for any $i \in T_1$ if the first player plays $i$ with probability
one then the second player is indifferent between all the strategies of $T_2$, and vice-versa.
\end{mydef}

Note that {\em pure} NE are always weakly stable, and coordination games always have pure NE. Further, for a mixed-equilibrium to be
{\em weakly stable}, for any $i \in T_1$ all the $B{ij}$s corresponding to $j \in T_2$ are the same. Thus, the next lemma follows.

\begin{lem}\label{le:pureNE}
If coordinates of a row or a column of $B$ are all distinct, then every {\em weakly stable} equilibrium is a pure Nash equilibrium.
\end{lem}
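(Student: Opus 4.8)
The plan is to unpack the weak-stability condition into a statement about the submatrix of $B$ cut out by the supports, and then read off the forced repetitions in rows and columns. First I would fix a weakly stable equilibrium $(\xx,\yy)$ of the coordination game $(B,B)$ and write $T_1=\{i : x_i>0\}$ and $T_2=\{j : y_j>0\}$ for the two supports. Since the text already observes that pure profiles are always weakly stable, it suffices to prove the reverse inclusion, i.e.\ that weak stability forces $|T_1|=|T_2|=1$.

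The core step is to translate the weak-stability definition into an explicit condition on the entries of $B$. In a coordination game both players receive $B_{ij}$ at the pure profile $(i,j)$, so if the first player is pinned to a fixed pure strategy $i\in T_1$, the second player's payoff from playing pure $j$ is exactly $B_{ij}$. The requirement that this leaves the second player indifferent among the strategies of $T_2$ therefore reads $B_{ij}=B_{ij'}$ for all $j,j'\in T_2$. Applying the symmetric half of the definition, pinning the second player to a fixed $j\in T_2$, yields $B_{ij}=B_{i'j}$ for all $i,i'\in T_1$. Taken together these two families of equalities say that every entry of the submatrix $(B_{ij})_{i\in T_1,\,j\in T_2}$ is equal to one common value; this is precisely the computation sketched in the remark preceding the lemma, and it uses only the weak-stability clause, not the full best-response conditions of (\ref{eq.ne}).

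From this constant-submatrix property the conclusion follows by contraposition. Suppose the equilibrium is not pure, so $|T_1|\ge 2$ or $|T_2|\ge 2$. If $|T_1|\ge 2$, choose distinct $i,i'\in T_1$ and any $j\in T_2$; the constant-submatrix property gives $B_{ij}=B_{i'j}$, so column $j$ of $B$ contains two equal coordinates. Symmetrically, if $|T_2|\ge 2$ then some row of $B$ repeats a coordinate. In either case $B$ has a row or column whose coordinates are not all distinct, which contradicts the genericity hypothesis; hence both supports are singletons and $(\xx,\yy)$ is a pure Nash equilibrium.

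The step I would be most careful about is the exact form of the hypothesis, since this is where the argument is genuinely tight rather than routine. Ruling out $|T_1|\ge 2$ consumes distinctness of \emph{columns}, while ruling out $|T_2|\ge 2$ consumes distinctness of \emph{rows}, so the proof really needs both and I would state the assumption as the genericity condition that every row \emph{and} every column of $B$ has pairwise distinct entries (as in the introduction). A single distinct row or column is not enough: a weakly stable mixed equilibrium can place its entire support on the rows and columns that do contain repetitions, so the lone distinct line is simply never touched by $T_1\times T_2$ and imposes no constraint. I would therefore make sure the hypothesis is read in its full ``all rows and all columns'' form, and I would double-check that nowhere in the contradiction do I silently appeal to the Nash best-response inequalities, so that the lemma rests only on weak stability together with distinctness.
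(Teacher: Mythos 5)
Your proof is correct and follows the same route as the paper's: weak stability forces the submatrix $(B_{ij})_{i\in T_1,\,j\in T_2}$ to be constant, which contradicts genericity unless both supports are singletons. In fact your version is more complete than the paper's one-line argument, which records only the row repetitions $B_{ij}=B_{ij'}$ for $j\neq j'\in T_2$ and so silently skips the case $|T_2|=1$, $|T_1|\ge 2$, where the contradiction must instead come from a repeated entry in the single supported column. Your reading of the hypothesis is also the right one: the argument genuinely needs distinctness of \emph{every} row and \emph{every} column (as in Theorem~\ref{mainres} and the introduction), since a mixed weakly stable equilibrium could otherwise place its support entirely on lines containing repetitions, so the literal ``some row or some column'' phrasing of the lemma would not suffice.
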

\begin{proof}
To the contrary suppose $(\xx,\yy)$ is a mixed weakly stable NE, then for $T_1=\{i\ |\ x_i>0\}$ and $T_2=\{j\ |\ y_j>0\}$ we have
$\forall i\in T_1,\ B_{ij}=B{ij'},\ \forall j\neq j' \in T_2$, a contradiction.
\end{proof}

\begin{remark}
We note that the games analyzed in
\cite{Arxiv:DBLP:journals/corr/abs-1208-3160,ITCS:DBLP:dblp_conf/innovations/ChastainLPV13}, where entries of matrix $B$ are
chosen uniformly at random from the interval $[1-s, 1+s]$, will have distinct
entries in each of its rows/columns with probability one, and thereby due to Lemma
\ref{le:pureNE} all its {\em weakly stable} NE are pure NE.
\end{remark}

Stability of a fixed-point is defined based on eigenvalues of Jacobian matrix evaluated at the fixed-point. So let us first describe
the Jacobian matrix of function $f$. We denote this matrix by $J$ which is $m+n \times m+n$, and let $f_k$ denote the function that
outputs $k^{th}$ coordinate of $f$. Then, $\forall i \neq i'\le m$ and $\forall j \neq j' \le n$
\[
\begin{array}{ll}
J_{ii}= \frac{df_i}{dx_i} =  \frac{(By)_{i}}{x^TBy}-x_{i}\left(\frac{(By)_{i}}{x^TBy}\right)^2,
& J_{(m+j)(m+j)} = \frac{df_{m+j}}{dy_j} = \frac{(B^Tx)_{i}}{x^TBy}-y_{i}\left(\frac{(B^Tx)_{i}}{x^TBy}\right)^2\\
J_{ii'} = \frac{df_i}{dx_{i'}} = -x_{i}\frac{(By)_{i}\cdot (By)_{i'}}{(x^TBy)^2}, &
J_{(m+j)(m+j')} = \frac{df_{m+j}}{dy_{j'}} = -y_{j}\frac{(B^Tx)_{j}\cdot (B^Tx)_{j'}}{(x^TBy)^2}\\
J_{i(m+j)} = \frac{df_i}{dy_j} =  x_{i}\frac{B_{ij}\cdot (x^TBy) - (By)_{i}(B^Tx)_{j}}{(x^TBy)^2}, &
  J_{(m+j)i} = \frac{df_{m+j}}{dx_i} =  y_{j}\frac{B_{ij}\cdot (x^TBy) - (B^Tx)_{j}(By)_{i}}{(x^TBy)^2}
\end{array}
\]

Now in order to use {\em Center Stable Manifold Theorem} (see Theorem \ref{manifold}), we need a map whose domain is full-dimensional
around the fixed-point. However, an $n$-dimensional simplex ($\D_n$) in $\Real^n$ has dimension $n-1$, and therefore the domain of $f$,
namely $\D_m\times \D_n$ is of dimension $m+n-2$ in space $\Real^{m+n}$. Therefore, we need to take a projection of the domain space and
accordingly redefine the map $f$. We note that the projection we take will be fixed-point dependent; this is to keep of the proof of
Lemma \ref{weakly} relatively less involved later.

Let $\rr=(\pp,\qq)$ be a fixed-point of map $f$ in $\D_m\times \D_n$.  Define $i(\rr)$ and $j(\rr)$ to be coordinates of $\pp$ and
$\qq$ respectively that are non-zero, i.e. $p_{i(\rr)}>0$ and $q_{j(\rr)}>0$.  Consider the mapping $z_{\textbf{r}} : R^{m+n} \to
R^{m+n-2}$ so that we exclude from each player $1,2$ the variables $x_{i(\rr)}, y_{j(\rr)}$ respectively.  We substitute the variables
$x_{i(\rr)}$ with $1-\sum_{i\neq i(\rr)} x_i$ and $y_{j(\rr)}$ with $1-\sum_{j \neq j(\rr)} y_j$.  Consider map $f$ under the
projection $z_{\rr}$, and let $J^{\rr}$ denote the {\em projected Jacobian} at $\rr$. Then, $\forall i,i'\in [1:m]\setminus\{i(\rr)\}$
and $\forall j,j'\in [1:n]\setminus\{j(\rr)\}$,

\begin{equation}
\begin{array}{ll}
J^{\textbf{r}}_{ii} =&  \frac{(By)_{i}}{x^TBy}-x_{i}\left(\frac{(By)_{i}}{x^TBy}\right)^2+x_{i}\frac{(By)_{i}\cdot (By)_{i(\rr)}}{(x^TBy)^2}\\
J^{\textbf{r}}_{(m+j)(m+j)} =&  \frac{(B^Tx)_{j}}{x^TBy}-y_{j}\left(\frac{(B^Tx)_{j}}{x^TBy}\right)^2+y_{j}\frac{(B^Tx)_{j}\cdot
(B^Tx)_{j(\rr)}}{(x^TBy)^2}\\
J^{\textbf{r}}_{ii'} = &-x_{i}\frac{(By)_{i}\cdot (By)_{i'}}{(x^TBy)^2}+x_{i}\frac{(By)_{i}\cdot (By)_{i(\rr)}}{(x^TBy)^2}\\
J^{\textbf{r}}_{(m+j)(m+j')} =& -y_{j}\frac{(B^Tx)_{j}\cdot (B^Tx)_{j'}}{(x^TBy)^2}+y_{j}\frac{(B^Tx)_{j}\cdot (B^Tx)_{j(\rr)}}{(x^TBy)^2}\\
J^{\textbf{r}}_{ij} = & x_{i}\frac{B_{ij}\cdot (x^TBy) - (By)_{i}(B^Tx)_{j}}{(x^TBy)^2}-x_{i}\frac{B_{ij(\rr)}\cdot (x^TBy) -
(By)_{i}(B^Tx)_{j(\rr)}}{(x^TBy)^2}\\
J^{\textbf{r}}_{(m+j)i} =&  y_{j}\frac{B_{ij}\cdot (x^TBy) - (B^Tx)_{j}(By)_{i}}{(x^TBy)^2}-y_{j}\frac{B_{i(\rr)j}\cdot (x^TBy) -
(B^Tx)_{j}(By)_{i(\rr)}}{(x^TBy)^2}
\end{array}
\end {equation}

The characteristic polynomial of $J^{\textbf{r}}$ at $\textbf{r}$ is  $$\prod_{i : p_{i}=0}\left(\lambda
-\frac{(B\textbf{q})_{i}}{\textbf{p}^TB\textbf{q}} \right)\prod_{i : q_{j}=0}\left(\lambda
-\frac{(B^T\textbf{p})_{j}}{\textbf{p}^TB\textbf{q}}\right)\times det(\lambda I -\mathbb{J}^{\textbf{r}})$$ where
$\mathbb{J}^{\textbf{r}}$ corresponds to $J^{\textbf{r}}$ at $\textbf{r}$ by deleting rows $i$ ,columns $j$ such that $p_i=0$ and $q_{j}=0$.\\

\begin{mydef} A fixed point $\textbf{r}$ is called linearly stable, if the eigenvalues of $J^{\textbf{r}}$ at $\textbf{r}$ have absolute value
less than or equal to 1. Otherwise it is called linear unstable.
\end{mydef}

The definition above is a slight modification of the classic definition of a stable fixed point, and has been tailored so that use of
Theorem \ref{manifold} becomes easier. The intuition here is that linearly unstable fixed points are going to be discarded by the dynamics in a robust manner, so it suffices to characterize the set of linearly stable fixed points.
Throughout the paper when we refer to (un)stable fixed points, we refer to this definition of stability.

\begin{lem}\label{le:sFP2NE}
Every linearly stable fixed point is a Nash Equilibrium.
\end{lem}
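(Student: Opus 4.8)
The plan is to read the Nash complementarity conditions (\ref{eq.ne}) directly off the spectrum of $J^{\rr}$. Write $\pi \defeq \pp^T B \qq$ for the common payoff. First I would record what comes for free from $\rr=(\pp,\qq)$ being a fixed point of the map $f$ of (\ref{maindy}): for every $i$ with $p_i>0$ the fixed-point equation $p_i = p_i (B\qq)_i/\pi$ forces $(B\qq)_i = \pi$, and symmetrically $(B^T\pp)_j = \pi$ for every $j$ with $q_j>0$. Thus every strategy in the support already attains payoff exactly $\pi$, so the only thing missing for the characterization (\ref{eq.ne}) of a Nash equilibrium is that no off-support strategy does strictly better, i.e. $(B\qq)_i \le \pi$ whenever $p_i=0$ and $(B^T\pp)_j \le \pi$ whenever $q_j=0$.

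Next I would extract exactly these inequalities from linear stability. The characteristic polynomial of $J^{\rr}$ computed above factors as $\prod_{i:p_i=0}\bigl(\lambda - (B\qq)_i/\pi\bigr)\prod_{j:q_j=0}\bigl(\lambda-(B^T\pp)_j/\pi\bigr)$ times $\det(\lambda I - \mathbb{J}^{\rr})$, so each ratio $(B\qq)_i/\pi$ with $p_i=0$ and each ratio $(B^T\pp)_j/\pi$ with $q_j=0$ is an eigenvalue of $J^{\rr}$. By the definition of linear stability, every such eigenvalue obeys $|(B\qq)_i/\pi|\le 1$ and $|(B^T\pp)_j/\pi|\le 1$.

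It remains to remove the absolute values, and this is where weak selection enters. Since $B_{ij}\in[1-s,1+s]$ with $s<1$, the matrix $B$ is entrywise positive, so for probability vectors $\pp,\qq$ both $(B\qq)_i$ and $\pi=\pp^TB\qq$ are strictly positive (and likewise $(B^T\pp)_j$). Hence each off-support eigenvalue is a positive number, and $|(B\qq)_i/\pi|\le 1$ becomes simply $(B\qq)_i \le \pi$, and similarly $(B^T\pp)_j \le \pi$. Combined with the support equalities of the first step, this gives $\max_k (B\qq)_k = \pi$ attained precisely on the support of $\pp$, and $\max_k (B^T\pp)_k = \pi$ attained on the support of $\qq$, which are exactly conditions (\ref{eq.ne}); therefore $\rr$ is a Nash equilibrium.

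I do not expect a genuine obstacle here: the computation of the characteristic polynomial, already carried out before the statement, does all the heavy lifting, and the argument reduces to the elementary observation that stability pins the off-support payoff ratios below $1$. The single point that requires care is the sign step: the stability definition only bounds $|\lambda|$, so the entrywise positivity of $B$ guaranteed by weak selection is essential to convert $|\lambda|\le 1$ into the one-sided inequality needed for Nash — without positivity a linearly stable fixed point need not be a Nash equilibrium.
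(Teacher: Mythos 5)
Your proposal is correct and takes essentially the same approach as the paper: the paper's proof argues by contradiction (a non-Nash fixed point yields an off-support eigenvalue $(B\qq)_i/\pp^TB\qq>1$), but it rests on exactly the same ingredients you use --- the fixed-point support equalities and the factorization of the characteristic polynomial of $J^{\rr}$. Your added remark about positivity of $B$ under weak selection is a correct (and implicitly used) detail rather than a divergence.
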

\begin{proof} Assume that a linearly stable fixed point $\textbf{r}$ is not a Nash equilibrium. Without loss of generality suppose player $t=1$
can deviate and gain. Since $\rr$ is a fixed-point of map $f$, $\forall p_i>0\ \Rightarrow (B\qq)_i = \pp^TB\qq$.
Hence, there exists a strategy $i \le m$ such that $p_i=0$ and $(B\qq)_i> \pp^TB\qq$. Then the characteristic polynomial has
$\frac{(B\qq)_i}{\pp^TB\qq} >1$ as a root, a contradiction.
\end{proof}

We are going to show that the dynamics of (\ref{maindy}) converge to linearly stable fixed-point except for measure zero starting conditions.
However, what we want is that it almost always converge to weakly stable NE. So, let us first establish relation between
stable fixed-points and weakly stable NE.

\begin{lem}\label{weakly}Every linearly stable fixed point is a weakly stable Nash equilibrium.
\end{lem}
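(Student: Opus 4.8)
The plan is to show that a linearly stable fixed point $\rr = (\pp, \qq)$ must be a weakly stable Nash equilibrium. By Lemma \ref{le:sFP2NE} we already know $\rr$ is a Nash equilibrium, so it remains only to verify the weak-stability condition: fixing one player to a pure strategy in its support leaves the other player indifferent among the strategies in his support. My strategy is to argue by contradiction — assume $\rr$ is a mixed Nash equilibrium that fails weak stability, and then exhibit an eigenvalue of the projected Jacobian $\mathbb{J}^{\rr}$ with absolute value strictly greater than $1$, contradicting linear stability.

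First I would set up the contradiction hypothesis concretely. Suppose weak stability fails; without loss of generality there is a strategy $i^* \in T_1$ (the support of $\pp$) such that, if player $1$ plays $i^*$ with probability one, player $2$ is \emph{not} indifferent among the strategies in $T_2$. Since we are in a coordination game ($B = A$), the payoff player $2$ gets from column $j$ against the pure strategy $i^*$ is $B_{i^* j}$, so failure of indifference means the values $\{B_{i^* j} : j \in T_2\}$ are not all equal. The crucial observation is that because $\rr$ is a Nash equilibrium, for every $i \in T_1$ we have $(B\qq)_i = \pp^T B \qq$ and for every $j \in T_2$ we have $(B^T\pp)_j = \pp^T B \qq$; that is, all active payoffs coincide with the common value $\pi \defeq \pp^T B \qq$. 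This lets me substitute $(B\qq)_i = (B^T\pp)_j = \pi$ into the six projected-Jacobian expressions, which collapses them dramatically: the diagonal blocks become (up to the common normalization by $\pi$) matrices whose off-diagonal structure is governed by $x_i$ and the differences $B_{ij} - B_{ij(\rr)}$, while the cross blocks $J^{\rr}_{ij}$ and $J^{\rr}_{(m+j)i}$ reduce to terms proportional to $B_{ij} - B_{i j(\rr)}$ and $B_{ij} - B_{i(\rr) j}$ respectively.

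The key step is then to analyze the spectrum of the reduced matrix $\mathbb{J}^{\rr}$ obtained from these simplified entries (restricted to the support coordinates, since the non-support rows/columns have already been factored out into the linear terms of the characteristic polynomial). The plan is to locate an explicit eigenvector, or to compute an appropriate bilinear form, witnessing an eigenvalue exceeding $1$ in modulus whenever the $B_{i^* j}$ over $j \in T_2$ fail to be constant. Concretely, because the cross-block entries are linear in the payoff differences $B_{ij} - B_{ij(\rr)}$, a nonconstant row of $B$ over the support $T_2$ forces these off-diagonal coupling terms to be nonzero, and the structure of the dynamics (multiplicative, renormalized) tends to amplify such perturbations — this is precisely the coin-on-its-edge instability described in the introduction. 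I would try to exhibit a perturbation direction supported on the indices where $B_{i^* j}$ differs, show that $\mathbb{J}^{\rr}$ maps it to a vector with a component of magnitude larger than the original along that direction, and conclude the existence of an eigenvalue of modulus $> 1$.

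The main obstacle will be the eigenvalue-location step for $\mathbb{J}^{\rr}$: the projected Jacobian is not symmetric and its entries, even after the Nash simplification $(B\qq)_i = (B^T\pp)_j = \pi$, retain a two-block coupling structure between the two players' support coordinates. Establishing that some eigenvalue has modulus strictly greater than $1$ — rather than merely nonzero off-diagonal entries — requires either a clever similarity transformation that exposes the spectrum, or identifying a suitable quadratic/bilinear form that the dynamics expands. I expect the cleanest route is to use the coordination structure ($B$ symmetric in the sense $B = A$, both players sharing payoffs) together with the fact that at a mixed equilibrium the common payoff is constant across the support, which should let the problematic coupling block decouple into a small submatrix whose eigenvalues can be bounded below by a Rayleigh-quotient or direct $2\times 2$ computation keyed to the specific pair of columns $j, j'$ on which $B_{i^* j} \neq B_{i^* j'}$. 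Getting this submatrix isolated correctly, and ensuring the projection choice $i(\rr), j(\rr)$ does not accidentally hide the instability, is where the care lies.
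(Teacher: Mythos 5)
Your overall strategy matches the paper's: use Lemma \ref{le:sFP2NE} to reduce to Nash equilibria, exploit the support conditions $(B\qq)_i=(B^T\pp)_j=\pp^TB\qq$ to simplify the projected Jacobian (which indeed collapses the diagonal blocks of $\mathbb{J}^{\rr}$ to the identity and the cross blocks to terms proportional to $B_{ij}-B_{ij(\rr)}$ and $B_{ij}-B_{i(\rr)j}$), and then derive an eigenvalue of modulus greater than one from failure of weak stability. However, the step you yourself flag as "the main obstacle" is exactly the step that constitutes the proof, and your proposed routes to it (an explicit expanding eigenvector, a similarity transformation, a Rayleigh quotient) are not carried out and would not go through easily: $\mathbb{J}^{\rr}$ is not symmetric, and nonzero off-diagonal coupling entries do not by themselves force an eigenvalue of modulus exceeding $1$ (a unipotent block matrix has nonzero off-diagonal entries and all eigenvalues equal to $1$). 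The paper's resolution is a specific scalar invariant you do not identify: it computes $tr((\mathbb{J}^{\rr})^2)$ and shows it equals $k$ plus a sum of squares of the quantities $M^{i,i',j,j'}=(B_{ij}-B_{i'j})-(B_{ij'}-B_{i'j'})$ weighted by the strictly positive coefficients $p_ip_{i'}q_jq_{j'}/(\pp^TB\qq)^2$. Since $tr((\mathbb{J}^{\rr})^2)=\sum_i\lambda_i^2\le\sum_i|\lambda_i|^2\le k$ when all eigenvalues have modulus at most $1$, stability forces every $M^{i,i',j,j'}$ to vanish. This sum-of-squares identity for the trace is the crux, and it requires the careful case analysis in the paper's Claim to see that the cross-products $\mathbb{J}^{\rr}_{i(m+j)}\mathbb{J}^{\rr}_{(m+j)i}$ regroup into perfect squares; nothing in your outline supplies a substitute for it.

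A second, smaller gap: you frame the contradiction hypothesis as non-constancy of $\{B_{i^*j}: j\in T_2\}$ and treat that as directly yielding the instability, but the quantity the Jacobian actually controls is $M^{i,i',j,j'}$, not $B_{i^*j}-B_{i^*j'}$. Passing from "all $M^{i,i',j,j'}=0$" to weak stability (equivalently, from failure of weak stability to some nonzero $M$) needs the additional averaging argument of Theorem 3.8 of \cite{Kleinberg09multiplicativeupdates}: $M^{i,i',j,j'}=0$ says the column difference $B_{ij}-B_{ij'}$ is independent of $i\in T_1$, and averaging over $i$ with weights $p_i$ and invoking the Nash condition $(B^T\pp)_j=(B^T\pp)_{j'}$ shows that common difference is zero. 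Your write-up elides this bridge, so even granting the eigenvalue step, the conclusion of weak stability would not yet follow.
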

\begin{proof}
Let $k\times k$ be the size of matrix $\RJ^{\rr}$. If $k=0$ then the equilibrium is pure and therefore is stable.
For the case when $k>0$, let $T_{\pp}$ and $T_{\qq}$ be the support of $\pp$ and $\qq$ respectively, i.e., $T_{\pp}=\{i\ |\ p_i>0\}$ and
similarly $T_{\qq}$. If we show that $\forall i,i'\in T_{\pp}$ and $\forall j,j'\in T_{\qq},\
M^{i,i',j,j'} = (B_{ij} - B_{i'j})-(B_{ij'} - B_{i'j'})=0$, then using argument similar to Theorem 3.8 in
\cite{Kleinberg09multiplicativeupdates}, the lemma follows. We show this using the expression of $tr((\RJ^{\rr})^2)$.

\begin{claim}
$tr((\mathbb{J}^{\textbf{r}})^2)  =$
\[
\begin{array}{l}
k + \frac{1}{(\textbf{p}^TB\textbf{q})^2} \displaystyle\sum_{ i<i ': i, i ' \neq i(\rr) \atop j < j ' : j, j
'\neq j(\rr)} {p_{i}q_{j}p_{i '}q_{j '} (M^{i,i',j,j'})^2 }
+  \frac{1}{(\textbf{p}^TB\textbf{q})^2} \sum_{ i: i \neq i(\rr) \atop j : j \neq j(\rr)} p_{i}q_{j}p_{i(\rr)}q_{j(\rr)} (M^{i,
i(\rr),j, j(\rr)})^2\\
 +   \frac{1}{(\textbf{p}^TB\textbf{q})^2}\displaystyle\sum_{ j<j':j,j' \neq j(\rr)  \atop i: i \neq i(\rr)}{p_{i}p_{i(\rr)}q_{j}q_{j'}(M^{i,
i(\rr),j, j '})^2} +   \frac{1}{(\textbf{p}^TB\textbf{q})^2}\sum_{ j:j \neq j(\rr) \atop i<i ': i, i ' \neq
i(\rr)}p_{i}p_{i'}q_{j}q_{j(\rr)}(M^{i, i ',j, j(\rr)})^2
\end{array}
\]
\end{claim}
\begin{proof}
Since $\mathbb{J}^{\textbf{r}} _{ii'}=0$, $\mathbb{J}^{\textbf{r}}_{(m+j)(m+j')}=0$ for $i \neq i'$ and $j\neq j'$,
and $\mathbb{J}^{\textbf{r}}_{ii}=1$, $\mathbb{J}^{\textbf{r}}_{(m+j)(m+j)}=1$
we get that $$tr((\mathbb{J}^{\textbf{r}})^2) = k+\sum_{i,j
}\mathbb{J}^{\textbf{r}}_{i(m+j)}\mathbb{J}^{\textbf{r}}_{(m+j)i}$$
We consider the following cases:
\begin{itemize}
\item Let $i<i '$ with $i,i' \neq i(\rr)$ and $j<j '$ with $j,j' \neq j(\rr)$ and we examine the term
$\frac{1}{(\textbf{p}^TB\textbf{q})^2}p_{i}q_{j}p_{i '}q_{j '}$ in the sum and we get that it appears with
\begin{align*}
&[[ M^{i,i',j, j(\rr)} ]  \times [ M^{i,i(\rr),j,j' } ]  +
[ M^{i,i ',j, j(\rr) } ]  \times [ M^{i(\rr),i ' ,j  ,j' } ]  \\+&
[M^{i,i' , j(\rr),j ' } ]  \times
[ M^{i,i(\rr),j,j ' }]  +
[M^{i,i ',j(rr),j ' } ]  \times
[ M^{ i(\rr), i',j , j ' }]  \\ =& (M^{i,i',j, j '})^2
\end{align*}

\item Let $i \neq i(\rr)$ and $j \neq j(\rr)$. The term $\frac{1}{(\textbf{p}^TB\textbf{q})^2}p_{i}q_{j}p_{i(\rr)}q_{j(\rr)}$ in the
sum appears in multiplication with $(M^{i,i(\rr),j, j(\rr)} )^2$.

\item Let $i<i '$ with $i,i' \neq i(\rr)$ and $j \neq j(\rr)$. The term $\frac{1}{(\textbf{p}^TB\textbf{q})^2}p_{i}q_{j}p_{i'}
q_{j(\rr)}$ in the sum appears with
\begin{align*}
&[M^{i, i', j ,j(\rr)}]  \times
[M^{i, i(\rr) ,j,j(\rr)}]   +
[M^{i, i' ,j, j(\rr)} ]  \times
[ M^{  i(\rr),i' ,j,j(\rr)} ]
\\& = (M^{i,i',j, j(\rr)})^2
\end{align*}

\item Similarly to the previous case, for
$j<j '$ with $j,j' \neq j(\rr)$ and $i \neq i(\rr)$. The term $\frac{1}{(\textbf{p}^TB\textbf{q})^2}p_{i}q_{j}p_{i(\rr)}
q_{j'}$ in the sum appears with  $(M^{i,i(\rr),j,j'})^2$.
\end{itemize}
\end{proof}

Trace of $(\mathbb{J}^{\textbf{r}})^2$ can not be larger than $k$, otherwise there exists an eigenvalue with absolute value greater
than one contradicting $\rr$ being a stable fixed-point. From the above claim, it is clear that $tr((\mathbb{J}^{\textbf{q}})^2)\ge k$
and it is exactly $k$ if and only if $M^{i,i',j,j'}=0$, $\forall i,i' \in T_1$ and $j,j' \in T_2$, and the lemma follows.
\end{proof}

In Appendix \ref{a:zero} we show that except for zero measure starting points $(\xx(0),\yy(0))$ the dynamics of (\ref{maindy}) converges to
stable fixed-points using the {\em Center Stable Manifold Theorem}, which proves the next theorem.

\begin{thm}\label{zero} The set of initial conditions in $\Delta_{m}\times \D_n$ so that the dynamical system converges to unstable
fixed points has measure zero.
\end{thm}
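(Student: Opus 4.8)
The plan is to apply the Center Stable Manifold Theorem (\ref{manifold}) locally at every unstable fixed point, and then patch the resulting local null sets into a single global null set. The three ingredients I will lean on are: the diffeomorphism property of the map $F$ and the \emph{pointwise} convergence of every orbit (both from \ref{convergence}), and Lindel\"of's lemma to defeat the uncountability of the equilibrium set.

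First I would fix an unstable fixed point $\rr=(\pp,\qq)$ and pass to the reduced coordinates $z_{\rr}$, so that the domain is full-dimensional ($m+n-2$) in a neighborhood of $\rr$ and the projected map is a genuine local diffeomorphism by \ref{convergence}. By definition of instability, the projected Jacobian $J^{\rr}$ has at least one eigenvalue of absolute value strictly greater than $1$. The Center Stable Manifold Theorem then furnishes an open neighborhood $B_{\rr}$ of $\rr$ and a local center-stable manifold $W_{\rr}\subseteq B_{\rr}$ whose dimension equals the number of eigenvalues of $J^{\rr}$ of absolute value at most $1$. Since there is a genuine unstable direction, $\dim W_{\rr}<m+n-2$, so $W_{\rr}$ is a Lebesgue-null subset of $\D_m\times\D_n$ (being null is a diffeomorphism-invariant notion, so the fixed-point-dependent choice of chart $z_{\rr}$ is harmless). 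The decisive property is that any orbit which remains in $B_{\rr}$ for all forward time must lie on $W_{\rr}$.

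Next I would promote this to the full basin of $\rr$. Because $F$ is a diffeomorphism, each backward iterate $F^{-t}$ preserves null sets. If an initial condition $x$ has its orbit converging to $\rr$, then by pointwise convergence it eventually enters the open neighborhood $B_{\rr}$ and, converging to the interior point $\rr$, never leaves it; hence $F^{t}(x)\in W_{\rr}$ for all large $t$, i.e. $x\in\bigcup_{t\ge 0}F^{-t}(W_{\rr})$. This is a countable union of null sets, so the basin of attraction of each single unstable fixed point has measure zero.

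The step I expect to be the \emph{main obstacle} is assembling these per-point statements when there may be uncountably many unstable fixed points, since a naive union bound over all of them is meaningless. Here I would take the open cover $\{B_{\rr}\}_{\rr\in U}$ of the set $U$ of unstable fixed points; as $\D_m\times\D_n$ is a separable metric space, Lindel\"of's lemma extracts a countable subcover $\{B_{\rr_k}\}_{k\in\mathbb N}$. For any $x$ whose orbit converges to an unstable fixed point, pointwise convergence guarantees a \emph{unique} limit $\rr\in U$, and $\rr\in B_{\rr_k}$ for some $k$; the orbit therefore gets trapped in the open set $B_{\rr_k}$ forever, forcing $F^{t}(x)\in W_{\rr_k}$ for large $t$ and hence $x\in\bigcup_{t\ge 0}F^{-t}(W_{\rr_k})$. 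Consequently the set of initial conditions converging to an unstable fixed point is contained in $\bigcup_{k}\bigcup_{t\ge 0}F^{-t}(W_{\rr_k})$, a countable union of null sets, and is therefore of measure zero. It is precisely here that pointwise convergence is indispensable: with only set-wise convergence an orbit could weave in and out of these neighborhoods infinitely often without ever settling onto a single $W_{\rr_k}$, and the covering argument would collapse.
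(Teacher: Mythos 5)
Your proposal is correct and follows essentially the same route as the paper's proof in Appendix~\ref{a:zero}: pass to the full-dimensional chart $z_{\rr}$, invoke the Center Stable Manifold Theorem at each unstable fixed point, use pointwise convergence from Theorem~\ref{convergence} to trap the tail of the orbit in a single ball $B_{\rr_k}$ from a Lindel\"of-countable subcover, and express the bad set as a countable union of preimages of the null sets $W^{sc}_{loc}(\rr_k)$ under null-preserving ($C^1$, locally Lipschitz) maps. The only cosmetic difference is that you appeal directly to diffeomorphism-invariance of null sets where the paper isolates this as a separate lemma (Lemma~\ref{lips}).
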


Theorem \ref{zero} together with Lemmas \ref{le:pureNE} and \ref{weakly} gives the following main result.

\begin{thm}\label{mainres} For all but measure zero initial conditions in $\Delta_{m}\times \D_n$, the dynamical system (\ref{maindy})
when applied to a coordination game $(B,B)$ with $B_{ij} \in [1-s, 1+s],\ \forall(i,j)$ for $s<1$, converges to weakly stable Nash
equilibria. Furthermore, assuming that entries in each row and column of $B$ are distinct, it converges to pure Nash equilibria.
\end{thm}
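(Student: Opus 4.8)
The plan is to assemble the final statement directly from the four results already in place, chaining them into a single implication: a generic initial condition converges to a \emph{stable} fixed point, that limit is then a \emph{weakly stable} Nash equilibrium, and finally, under the genericity hypothesis, that equilibrium is \emph{pure}.

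First I would invoke Theorem~\ref{convergence}: for \emph{every} starting point $(\xx(0),\yy(0))\in\D_m\times\D_n$ the orbit of (\ref{maindy}) converges point-wise to a unique fixed point of $f$. This guarantees that the only thing left to control is \emph{which} fixed point is reached, not whether a limit exists. I would then split the fixed points into linearly stable and linearly unstable ones according to the spectral criterion on $J^{\rr}$ introduced above.

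Next I would apply Theorem~\ref{zero}, which asserts that the set of initial conditions whose orbit converges to a linearly \emph{unstable} fixed point has Lebesgue measure zero in $\D_m\times\D_n$. Combined with the universal convergence supplied by Theorem~\ref{convergence}, the complementary measure-one set of initial conditions must converge to a linearly stable fixed point. Lemma~\ref{weakly} then upgrades each such stable limit to a weakly stable Nash equilibrium, which establishes the first assertion. For the second assertion I would add the genericity hypothesis that every row and column of $B$ has distinct entries and invoke Lemma~\ref{le:pureNE}, which forces every weakly stable equilibrium to be pure; hence for all but a measure-zero set of initial conditions the dynamics converges to a pure Nash equilibrium.

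The assembly itself is routine, being essentially an intersection of complements of null sets. The genuinely delicate work lives inside the cited results, so the step I would most want pinned down is Theorem~\ref{zero}. Its difficulty is that the Center Stable Manifold Theorem yields only a \emph{local} measure-zero stable set around each unstable fixed point, and converting this into a \emph{global} null statement requires (i) the diffeomorphism property of $f$ from Theorem~\ref{convergence}, so that preimages under the invertible smooth map preserve null sets, and (ii) a covering argument to handle the possibly \emph{uncountable} set of unstable fixed points, where a black-box union bound fails and one must instead lean on point-wise convergence together with a Lindel\"of-type reduction to a countable subcover before taking the union. Granting those ingredients, the top-level theorem follows immediately from the chain above.
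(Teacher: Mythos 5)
Your proposal matches the paper's proof: the paper obtains Theorem~\ref{mainres} precisely by combining Theorem~\ref{zero} with Lemmas~\ref{weakly} and~\ref{le:pureNE}, with Theorem~\ref{convergence} supplying point-wise convergence and the diffeomorphism property used inside the proof of Theorem~\ref{zero}. Your identification of the delicate ingredients (local center-stable manifolds, null-set preservation under the locally Lipschitz maps, and the Lindel\"of reduction to a countable subcover) is exactly how the appendix handles the global measure-zero claim, so the argument is correct and essentially identical to the paper's.
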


\section{Conclusion}

We show that standard  mathematical models of haploid evolution imply the
extinction of genetic diversity in the long term limit. This reflects a widely believed  conjecture in population genetics
\cite{barton2014diverse}.  We prove this via recent established connections between game theory, learning theory and genetics \cite{Arxiv:DBLP:journals/corr/abs-1208-3160,ITCS:DBLP:dblp_conf/innovations/ChastainLPV13,PNAS2:Chastain16062014}. Specifically,
in game theoretic terms we show that in the case of coordination games, under minimal genericity
assumptions, discrete
 MWUA converges to pure Nash equilibria for all but a zero measure of initial conditions. This result holds despite the fact
 that mixed Nash equilibria can be exponentially (or even uncountably) many, completely dominating in number the set of pure
 Nash equilibria.
Thus, in haploid organisms the long term preservation of
 genetic diversity needs to be safeguarded by other evolutionary mechanisms such as mutations and
  speciation.

The intersection between computer science, genetics and game theory has already provided some unexpected results and interesting novel connections. As these connections become clearer, new questions emerge alongside the possibility of transferring knowledge between these areas. In appendix \ref{sec:disc} we raise some novel questions that have to do with speed of dynamics as well as the possibility of understanding the evolution of biological systems given random initial conditions. Such an approach can be thought of as a middle ground between Price of Anarchy (worst case scenario) and Price of Stability (best case scenario) in game theory. We believe that this approach can also be useful from the standard game theoretic lens \cite{Soda15a}.

\section{Acknowledgments}

\noindent
We would like to thank Prasad Tetali for helpful discussions and suggestions.

\bibliographystyle{plain}
\bibliography{sigproc4}

\newpage
\appendix

\section{Proof of Theorem \ref{zero}}\label{a:zero}
To prove Theorem \ref{zero}, we will make use of the following important theorem in dynamical systems.

\begin{thm}\label{manifold}(Center and Stable Manifolds, p. 65 of \cite{shub})
Let $\textbf{p}$ be a fixed point for the $C^r$ local diffeomorphism $h: U \to \mathbb{R}^n$ where $U \subset \mathbb{R}^n$ is an open
(full-dimensional) neighborhood of $\pp$ in $\mathbb{R}^n$ and $r \geq 1$. Let $E^s \oplus E^c \oplus E^u$ be the invariant splitting
of $\mathbb{R}^n$ into generalized eigenspaces of $Dh(\textbf{p})$\footnote{Jacobian of $h$ evaluated at $\pp$} corresponding to
eigenvalues of absolute value less than one, equal to one, and greater than one. To the $Dh(\textbf{p})$ invariant subspace $E^s\oplus
E^c$ there is an associated local $h$ invariant $C^r$ embedded disc $W^{sc}_{loc}$ of dimension $dim(E^s \oplus E^c)$, and 
ball $B$ around $\pp$ such that:
\begin{equation} h(W^{sc}_{loc}) \cap B \subset W^{sc}_{loc}.\textrm{  If } h^n(\textbf{x}) \in B \textrm{ for all }n \geq 0,
\textrm{ then }\textbf{x} \in W^{sc}_{loc}
\end{equation}
\end{thm}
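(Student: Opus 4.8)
The plan is to prove this by the Lyapunov--Perron method: I will realize $W^{sc}_{loc}$ as the set of initial points whose forward $h$-orbits grow no faster than a fixed rate $a^n$, and then exhibit that set as the graph of a $C^r$ map over $E^s\oplus E^c$. First I would normalize $\pp=0$ and write $h=A+g$ with $A=Dh(\pp)$ and $g$ of class $C^r$ satisfying $g(0)=0$, $Dg(0)=0$. Multiplying $g$ by a smooth cutoff supported in a small ball, I obtain $\tilde h=A+\tilde g$ that agrees with $h$ on a ball $B$ about $\pp$ while $\tilde g$ has globally small Lipschitz constant $\delta$ and small sup-norm; any local invariant manifold built for $\tilde h$ then restricts on $B$ to the object claimed for $h$. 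Using the $A$-invariant splitting $\mathbb{R}^n=E^s\oplus E^c\oplus E^u$ I write $A=\mathrm{diag}(A_s,A_c,A_u)$ and pass to adapted norms so that $\|A_s\|\le\mu_s<1$, $\|A_u^{-1}\|\le\mu_u^{-1}$ with $\mu_u>1$, and $\|A_c^{\pm1}\|\le 1+\eta$ for $\eta$ as small as desired. Finally I fix a growth rate $a$ with $\max(\mu_s,1+\eta)<a<\mu_u$.

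On the Banach space $\mathcal{Y}_a=\{y=(y_n)_{n\ge0}:\ \|y\|_a:=\sup_{n\ge0}a^{-n}\|y_n\|<\infty\}$ I define, for each parameter $\xi=(\xi_s,\xi_c)\in E^s\oplus E^c$, the Perron operator that propagates the stable and center components forward from $\xi$ but reconstructs the unstable component backward from $+\infty$:
\[
(T_\xi y)_n=\Big(A_s^{\,n}\xi_s+\sum_{k=0}^{n-1}A_s^{\,n-1-k}\tilde g_s(y_k),\ A_c^{\,n}\xi_c+\sum_{k=0}^{n-1}A_c^{\,n-1-k}\tilde g_c(y_k),\ -\sum_{k=n}^{\infty}A_u^{\,n-1-k}\tilde g_u(y_k)\Big).
\]
The backward summation in the unstable direction is the crux of the construction: it is the only choice yielding orbits of at most $a^n$ growth, since the homogeneous unstable solution grows like $\mu_u^n>a^n$. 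A direct estimate of the three weighted geometric sums---convergent precisely because $\mu_s<a$, $1+\eta<a$ and $a<\mu_u$---shows $T_\xi$ maps $\mathcal{Y}_a$ into itself and, for $\delta$ small, is a contraction uniformly in $\xi$. Let $y(\xi)$ be its unique fixed point; then $y(\xi)$ is the unique forward $\tilde h$-orbit in $\mathcal{Y}_a$ with prescribed stable--center initial data $\xi$.

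Reading off the zeroth term, the initial point $y(\xi)_0$ has stable--center part $\xi$ and unstable part $\phi(\xi):=-\sum_{k\ge0}A_u^{\,-1-k}\tilde g_u(y(\xi)_k)$, so $W^{sc}_{loc}:=\{(\xi,\phi(\xi))\}$ is a graph over $E^s\oplus E^c$ of the asserted dimension $\dim(E^s\oplus E^c)$, with $\phi(0)=0$ and, since $\tilde g$ vanishes to first order, $D\phi(0)=0$, hence tangent to $E^s\oplus E^c$. Local invariance follows because shifting a fixed-point orbit by one step gives another element of $\mathcal{Y}_a$ with initial data $(x_1^s,x_1^c)$, so $\tilde h(y(\xi)_0)\in W^{sc}_{loc}$; intersecting with $B$, where $\tilde h=h$, yields $h(W^{sc}_{loc})\cap B\subset W^{sc}_{loc}$. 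For the orbit characterization, if $h^n(\xx)\in B$ for all $n\ge0$ then this orbit is also a $\tilde h$-orbit and is bounded, hence lies in $\mathcal{Y}_a$ because $a>1$; by uniqueness of the fixed point it coincides with $y(\xi)$ for $\xi=(x^s,x^c)$, so $\xx\in W^{sc}_{loc}$, which is exactly the displayed implication.

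The step I expect to be the main obstacle is the $C^r$ regularity of $\phi$, equivalently the $C^r$ dependence of the fixed point $y(\xi)$ on the parameter $\xi$. Naive differentiation under the contraction fails, because the formal derivative naturally lives in a space of slightly faster-growing sequences and one loses a degree of control at each order. The remedy I would adopt is the fiber contraction theorem of Hirsch--Pugh together with a scale of spaces $\mathcal{Y}_{a_0}\subset\cdots\subset\mathcal{Y}_{a_r}$ with slowly increasing rates $a_0<\cdots<a_r<\mu_u$: one shows inductively that the formal derivatives of order up to $r$ are themselves fixed points of fiber contractions layered over the base contraction $\xi\mapsto y(\xi)$, and that these fixed points depend continuously on $\xi$, which upgrades $\phi$ to class $C^r$. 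The tangency $D\phi(0)=0$ then drops out of the explicit formula for $\phi$. (A graph-transform, i.e.\ Hadamard, argument is an alternative route, but I find the Perron scheme cleaner here because the forward-orbit-in-$B$ hypothesis matches the fixed-point characterization directly.)
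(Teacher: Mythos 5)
Your proposal addresses a statement the paper never proves: Theorem \ref{manifold} is imported verbatim as a black box from p.~65 of Shub's \emph{Global Stability of Dynamical Systems}, and the proof there proceeds by the Hadamard graph-transform method (a contraction on a space of Lipschitz sections over $E^s\oplus E^c$ in the $C^0$ metric, with smoothness upgraded afterwards by Hirsch--Pugh fiber-contraction arguments). Your Lyapunov--Perron construction is a correct and equally standard alternative route, and your operator $T_\xi$ is set up correctly: its fixed points are exactly the $\tilde h$-orbits of growth at most $a^n$ with prescribed stable--center initial data, because the backward unstable summation is forced by $a<\mu_u$, the contraction estimates rest on precisely the three convergent geometric series you name, and both local invariance and the displayed orbit characterization follow from uniqueness of the fixed point. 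Notably, the Perron scheme delivers the second half of the displayed property --- $h^n(\textbf{x})\in B$ for all $n\ge 0$ implies $\textbf{x}\in W^{sc}_{loc}$ --- as an immediate corollary of uniqueness (a bounded orbit lies in $\mathcal{Y}_a$ because $a>1$), whereas in a graph-transform proof this characterization needs a separate trapping argument; since that implication is exactly the property exploited in Appendix~\ref{a:zero}, your route is arguably better matched to how the theorem is used here, while the graph transform generalizes more readily (e.g., to normally hyperbolic invariant manifolds). Two points you should tighten to make the sketch airtight: (i) the cutoff makes only the Lipschitz ($C^1$) size of $\tilde g$ small, not its higher derivatives, which are merely bounded on the support; this suffices, but deserves a sentence, since the order-$j$ fiber maps ($j\ge 2$) are affine in the top-order unknown with contraction constant governed solely by the $C^1$ bound, the large higher derivatives entering only inhomogeneously. (ii) The phrase ``slowly increasing rates $a_0<\dots<a_r<\mu_u$'' must be quantified: the formal $j$-th derivative of $\xi\mapsto y(\xi)$ grows like $a_0^{\,jn}$, so you need essentially $a_0^{\,r}<\mu_u$; this is available because the center spectrum lies on the unit circle, $r$ is finite, and $\eta$ can be chosen with $(1+\eta)^r<\mu_u$ --- the finite spectral-gap condition that also explains why the center-stable manifold is $C^r$ for every finite $r$ yet in general not $C^\infty$. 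With those two clarifications, your outline is a complete and correct proof of the quoted theorem.
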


To use the theorem above we need to project the vector field to a lower dimensional space.  We consider the (diffeomorphism) function
$g$ that is a projection of the points $(\textbf{x},\textbf{y}) \in \mathbb{R}^{m+n}$ to $\mathbb{R}^{m+n-2}$ by excluding a specific
(the "first") variable for each player (we know that the probabilities must sum up to one for each player). Let $N=m+n$, then
we denote this projection of $\Delta_{N}$ by $g(\Delta_{N})$, i.e., $(\xx,\yy) \to_g (\xx',\yy')$ where $\xx'=(x_2,\dots,x_n)$ and
$\yy'=(y_2,\dots,y_n)$. Further, recall the fixed-point dependent projection function $z_{\rr}$ defined in Section \ref{sec:pureNE},
where we remove $x_{i(\rr)}$ and $y_{j(\rr)}$.
\\\\Map $f$ is one corresponding to dynamical system (\ref{maindy}). For an unstable fixed point $\textbf{r}$ we consider the function
$\psi_{\textbf{r}}(\vv) = z_{\textbf{r}}\circ f \circ z_{\textbf{r}}^{-1}(\vv)$ which is $C^1$ local diffeomorphism (due to theorem \ref{convergence} we know that the rule of the dynamical system is a diffeomorphism),
$(\vv) \in R^{N-2}$.
Let $B_{\rr}$ be the (open) ball that is derived from Theorem \ref{manifold} and we consider the union of these balls
(transformed in $\mathbb{R}^{N-2}$) $$A = \cup _{\textbf{r}}A_{\textbf{r}}$$ where $A_{\textbf{r}} =
g(z_{\textbf{r}}^{-1}(B_{\textbf{r}}))$ ($z^{-1}_{\textbf{r}}$ "returns" the set $B_{\textbf{r}}$ back to
$\mathbb{R}^{N}$). Set $A_{\textbf{r}}$ is an open subset of $\mathbb{R}^{N-2}$ (by continuity of $z_{\textbf{r}}$).  Taking
advantage of separability of $\mathbb{R}^{N-2}$ we have the following theorem.

\begin{thm} (Lindel\H{o}f's lemma) For every open cover there is a countable subcover.
\end{thm}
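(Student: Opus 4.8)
The plan is to derive Lindel\"of's lemma directly from the separability of $\mathbb{R}^{N-2}$, which in any metric space upgrades to second countability. The entire argument hinges on producing a \emph{countable base} for the topology and then using that base to thin out an arbitrary open cover into a countable subcover. Separability is exactly the hypothesis the preceding discussion flags, so the first move is to convert it into a countable base.

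First I would fix a countable dense subset $D \subseteq \mathbb{R}^{N-2}$, for instance $D = \mathbb{Q}^{N-2}$, and form the family
\[
\mathcal{B} = \{ B(q, 1/k) : q \in D,\ k \in \mathbb{N} \}
\]
of open balls with centers in $D$ and rational radii. This family is countable, being indexed by $D \times \mathbb{N}$. The key claim is that $\mathcal{B}$ is a base: given any open set $U$ and any point $x \in U$, choose $k$ with $B(x, 2/k) \subseteq U$, and by density of $D$ a center $q \in D$ with $\| x - q \| < 1/k$; then $x \in B(q, 1/k) \subseteq B(x, 2/k) \subseteq U$, so $\mathcal{B}$ contains a basic set wedged between $x$ and $U$.

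Next, given an arbitrary open cover $\{U_\alpha\}_{\alpha \in I}$ of the space, I would define
\[
\mathcal{B}' = \{ B \in \mathcal{B} : B \subseteq U_\alpha \text{ for some } \alpha \in I \}.
\]
As a subfamily of the countable family $\mathcal{B}$, the family $\mathcal{B}'$ is itself countable. By the base property just established, every point $x$ lies in some $U_\alpha$ and hence in some $B \in \mathcal{B}'$, so $\mathcal{B}'$ already covers the space. Finally, for each $B \in \mathcal{B}'$ I would select (invoking countable choice) one index $\alpha(B) \in I$ with $B \subseteq U_{\alpha(B)}$; then $\{ U_{\alpha(B)} : B \in \mathcal{B}' \}$ is a countable subfamily of the original cover, and since $\bigcup_{B \in \mathcal{B}'} B$ covers the space, so does this subfamily.

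The argument is short and its only genuine content is the passage from separability to a countable base; there is no serious obstacle here, only the bookkeeping of the nested inclusions $x \in B \subseteq U$ and a harmless appeal to countable choice when picking one $\alpha$ per basic set. The point worth stating carefully is that it is $\mathcal{B}'$, not the cover itself, that must be made countable first: the cover $\{U_\alpha\}$ may well be uncountable (indeed in the application it is indexed by the continuum of unstable fixed points), but each member that is actually needed is witnessed by at least one element of the countable base, and it is precisely this witnessing that caps the cardinality of the extracted subcover at countable.
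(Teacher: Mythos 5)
Your proof is correct, and it is the standard textbook argument: separability of $\mathbb{R}^{N-2}$ yields a countable base $\{B(q,1/k) : q \in \mathbb{Q}^{N-2},\ k \in \mathbb{N}\}$, and every open cover is then refined through the base, with one cover element chosen per witnessing basic ball. The paper itself states Lindel\H{o}f's lemma without proof, invoking it as a known consequence of separability, so there is no authorial proof to compare against; your write-up supplies exactly the argument the paper implicitly relies on, including the point that matters for the application --- the original cover (indexed by the possibly uncountable set of unstable fixed points $\rr$) may be uncountable, and countability enters only through the base, capping the extracted subcover $\{U_{\alpha(B)}\}$ at countably many sets.
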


Therefore due to the above theorem, we can find a countable subcover for $A$, i.e.,
there exists fixed-points $\rr_1,\rr_2,\dots$ such that $A = \cup _{m=1}^{\infty}A_{\textbf{r}_{m}}$.
\\\\ For a $t \in \mathbb{N}$ let $\psi_{t,\textbf{r}}(\vv)$ the point after $t$ iteration of dynamics (\ref{maindy}), starting
with $\vv$, under projection $z_{\rr}$, i.e., $\psi_{t,\textbf{r}}(\vv) = z_{\textbf{r}}\circ f^{t} \circ
z_{\textbf{r}}^{-1}(\vv)$.
If point $(\vv) \in int \; g(\Delta_{N})$ (which corresponds to $g^{-1}(\vv)$ in our original $\Delta_{N}$) has as unstable fixed point
as a limit, there must exist a $t_{0}$ and $m$ so that $\psi_{t,\textbf{r}_{m}} \circ z_{\textbf{r}_{m}} \circ
g^{-1}(\vv) \in B_{\textbf{r}_{m}}$ for all $t \geq t_{0}$ (we have point-wise convergence from theorem \ref{convergence}) and therefore again from Theorem \ref{manifold} we get that
$\psi_{t_{0},\textbf{r}_{m}} \circ z_{\textbf{r}_{m}} \circ g^{-1}(\vv) \in W_{loc}^{sc}(\rr_m)$,
hence $\vv \in g\circ z^{-1}_{\textbf{r}_{m}} \circ
\psi^{-1}_{t_{0},\textbf{r}_{m}}(W_{loc}^{sc}(\rr_m))$.\\\\ Hence the set of points in $int \;
g(\Delta_{N})$ whose $\omega$-limit has an unstable equilibrium is a subset of
\begin{equation}
C=  \cup_{m=1}^{\infty} \cup_{t=1}^{\infty} g\circ z^{-1}_{\textbf{r}_{m}} \circ
\psi^{-1}_{t,\textbf{r}_{m}}(W_{loc}^{sc}(\rr_m))
\end{equation}

Since $\rr_m$ is unstable corresponding $dim(E^u)\ge 1$, and therefore dimension of $W_{loc}^{sc}(\rr_m)$ is at most $N-3$. Thus,
the manifold $W_{loc}^{sc}(\rr_m)$ has Lebesgue measure zero in $\mathbb{R}^{N-2}$. Finally since $g\circ z^{-1}_{\rr_{m}} \circ
\psi^{-1}_{t,\rr_{m}} : \mathbb{R}^{N-2} \to \mathbb{R}^{N-2}$ is continuously differentiable, $\psi_{t, \textbf{r}_{m}}$ is $C^1$ and
locally Lipschitz (see \cite{perko} p.71). Therefore using Lemma \ref{lips} below it preserves the null-sets, and thereby we get that
$C$ is a countable union of measure zero sets, i.e., is measure zero as well, and Theorem \ref{zero} follows.

\begin{lem}\label{lips} Let $g: \mathbb{R}^m \to \mathbb{R}^m$ be a locally  Lipschitz   function, then $g$ is null-set preserving, i.e., for $E \subset \mathbb{R}^m$ if $E$ has measure zero then $g(E)$ has also measure zero.
\end{lem}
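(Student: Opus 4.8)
The plan is to reduce to the globally Lipschitz case and then combine a standard volume-distortion estimate with the definition of a null set. First I would record the elementary geometric fact that a Lipschitz map cannot inflate volume by more than a dimension-dependent constant. Concretely, if $h:\mathbb{R}^m\to\mathbb{R}^m$ is (globally) Lipschitz with constant $L$, then any cube $Q$ of side length $s$ has image of diameter at most $L\sqrt{m}\,s$, so $h(Q)$ is contained in a ball of radius $L\sqrt{m}\,s$; hence its outer Lebesgue measure satisfies $\mu^*(h(Q))\le \omega_m(L\sqrt{m}\,s)^m = C_m L^m s^m = C_m L^m \mu(Q)$, where $\omega_m$ is the volume of the unit ball and $C_m=\omega_m m^{m/2}$ depends only on the dimension.

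Second, using this estimate I would settle the claim for a globally Lipschitz $h$. Given $E$ of measure zero and any $\varepsilon>0$, cover $E$ by countably many cubes $\{Q_k\}$ with $\sum_k\mu(Q_k)<\varepsilon$. Then $h(E)\subseteq\bigcup_k h(Q_k)$, and by countable subadditivity of outer measure together with the estimate above, $\mu^*(h(E))\le\sum_k\mu^*(h(Q_k))\le C_m L^m\sum_k\mu(Q_k)<C_m L^m\varepsilon$. Letting $\varepsilon\to 0$ gives $\mu^*(h(E))=0$, i.e. $h(E)$ is null.

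Finally I would upgrade from global to merely local Lipschitzness. By hypothesis every point of $\mathbb{R}^m$ has an open neighborhood on which $g$ is Lipschitz; these neighborhoods form an open cover of $\mathbb{R}^m$, and by Lindel\"of's lemma (already invoked in this appendix) there is a countable subcover $\{U_j\}$ with each restriction $g|_{U_j}$ Lipschitz, say with constant $L_j$. Writing $E=\bigcup_j (E\cap U_j)$, each $E\cap U_j$ is null as a subset of $E$, so applying the globally Lipschitz case to $g|_{U_j}$ shows each $g(E\cap U_j)$ is null. Hence $g(E)=\bigcup_j g(E\cap U_j)$ is a countable union of null sets and is therefore null, which proves the lemma.

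As for difficulty, this is a routine real-analysis statement and none of the steps is genuinely hard. The only point deserving care is the local-to-global reduction: one must guarantee that a countable family of Lipschitz neighborhoods suffices, which is exactly what Lindel\"of's lemma (equivalently, second countability of $\mathbb{R}^m$) supplies, and one must check that applying the global volume estimate to the restriction $g|_{U_j}$ is legitimate on the null set $E\cap U_j$.
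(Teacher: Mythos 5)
Your proposal is correct and follows essentially the same route as the paper's proof: both reduce to the locally Lipschitz pieces via a Lindel\"of countable subcover, then use the standard covering argument with the Lipschitz volume-distortion bound (you use cubes with a constant $C_mL^m$, the paper uses balls with constant $K_i^m$, an immaterial difference). No gaps.
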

\begin{proof}  Let $B_{\gamma}$ be an open ball such that $||g(\textbf{y}) - g(\textbf{x})|| \leq K_{\gamma} ||\textbf{y}-\textbf{x}||$
for all $\textbf{x},\textbf{y} \in B_{\gamma}$. We consider the union $\cup_{\gamma}B_{\gamma}$ which cover $\mathbb{R}^m$ by the
assumption that $g$ is locally Lipschitz. By Lindel\H{o}f's lemma we have a countable subcover, i.e., $\cup_{i=1}^{\infty}B_{i}$. Let
$E_{i} = E \cap B_{i}$. We will prove that $g(E_{i})$ has measure zero. Fix an $\epsilon >0$. Since $E_{i} \subset E$, we have that
$E_{i}$ has measure zero, hence we can find a countable cover of open balls $C_{1},C_{2},... $ for $E_{i}$, namely $E_{i} \subset
\cup_{j=1}^{\infty}C_{j}$ so that $C_{j} \subset B_{i}$ for all $j$ and also $\sum_{j=1}^{\infty} \mu(C_{j}) <
\frac{\epsilon}{K_{i}^m}$. Since $E_{i} \subset \cup_{j=1}^{\infty}C_{j}$ we get that $g(E_{i}) \subset \cup_{j=1}^{\infty}g(C_{j})$,
namely $g(C_{1}),g(C_{2}),...$ cover $g(E_{i})$ and also $g(C_{j}) \subset g(B_{i})$ for all $j$. Assuming that ball $C_{j} \equiv
B(\textbf{x},r)$ (center $\textbf{x}$ and radius $r$) then it is clear that $g(C_{j}) \subset B(g(\textbf{x}),K_{i} r)$ ($g$ maps the
center $\textbf{x}$ to $g(\textbf{x})$ and the radius $r$ to $K_{i}r$ because of Lipschitz assumption). But $\mu(B(g(\textbf{x}),K_{i}
r)) = K_{i}^m \mu(B(\textbf{x}, r)) = K_{i}^m \mu(C_{j})$, therefore $\mu(g(C_{j})) \leq K_{i}^m \mu(C_{j})$ and so we conclude that
$$\mu(g(E_{i})) \leq \sum_{j=1}^{\infty}\mu(g(C_{j})) \leq K_{i}^m \sum_{j=1}^{\infty}\mu(C_{j}) < \epsilon$$ Since $\epsilon$ was
arbitrary, it follows that $\mu(g(E_{i})) =0$. To finish the proof, observe that $g(E) = \cup_{i=1}^{\infty} g(E_{i})$ therefore
$\mu(g(E)) \leq \sum_{i=1}^{\infty} \mu(g(E_{i})) =0$.  \end{proof}

\section{Figure of stable/unstable manifolds in simple example}\label{sec:fig}

The figure \ref{fig:gamew3} corresponds to a two agent coordination game with payoff structure $B=\left[\begin{array}{cc}1 & 0 \\ 0 & 3
\end{array}\right]$. Since this game has two agents with two strategies each, in order to capture the state space of game it suffices
to describe one number for each agent, namely the probability with which he will play his first strategy. This game has three Nash
equilibria, two pure ones $(0,0), (1,1)$ and a mixed one $(\frac34,\frac34)$. We depict them using small circles in the figure.  The
mixed equilibrium has a stable manifold of zero measure that we depict with a black line.  In contrast, each pure Nash equilibrium has
region of attraction of positive measure.  The stable manifold of the mixed NE separates the regions of attraction of the two pure
equilibria.  The $(0,0)$ equilibrium has larger region of attraction, represented by darker region in the figure. It is the risk
dominant equilibrium of the game. Recently, in \cite{Soda15a} techniques have been developed to compute such objects (stable manifolds,
volumes of region of attraction) analytically.

\begin{figure}[!htb]
\centering
  \includegraphics[width=.43\linewidth]{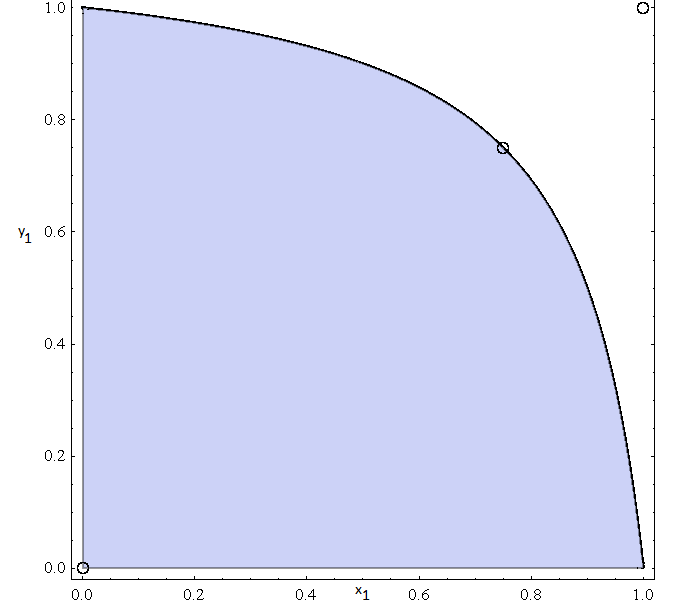}
  \caption{Regions of attraction for $B=[1\ 0;0\ 3]$, where $\circ$ correspond to NE points.} 
  \label{fig:gamew3}
\end{figure}

\section{Discussion}
\label{sec:disc}
Building on the observation of \cite{PNAS2:Chastain16062014} that the process of natural selection under weak-selection regime
can be modeled as discrete Multiplicative weight update dynamics on coordination games, we showed that it
converges to pure NE almost always in the case of two-player games.
As a consequence natural selection alone seem to lead to
extinction of genetic diversity in the long term limit, a widely believed conjecture of haploid genetics \cite{barton2014diverse}.
Thus, the long term preservation of genetic diversity must be safeguarded by evolutionary mechanisms which are orthogonal to natural
selection such as mutations and speciation.
This calls for modeling and study of these latter phenomenon in game theoretic terms under $\dmwa$.

Additionally below we observe that in some special cases, $(i)$ the rate of convergence of $\dmwa$ is doubly exponentially fast in some
special cases, and $(ii)$ the expected fitness of the resulting population, starting with a random distribution, under such dynamics is
constant factor away from the optimum fitness. It will be interesting to get similar results for the general case of two-player
coordination games.

\noindent{\bf Rate of Convergence.}
Let's consider a special case where $B$ is a square diagonal matrix.
In that case, starting from any point $(\textbf{x}(0),\textbf{y}(0))$ observe that after one time step, we get that
$\textbf{x}(1)=\textbf{y}(1)$ (i.e $f(\textbf{x}(0))=f(\textbf{y}(0))$). Therefore without loss of generality let us assume that
$\xx(0)=\yy(0)$. Then both the players get the same payoff from each of their pure strategies in the first play as $B=B^T$. And thus it
follows that $f^n(\textbf{x}(0))=f^n(\textbf{y}(0))$ for all $n \geq 1$. Let $U_i(t)$ be the payoff that both gets from their $i^{th}$
strategy at time $t$ (both will get the same payoff). Suppose for $i\neq j$ we have $U_i(0)=cU_j(0)$, then
\[
\frac{U_i(t)}{U_j(t)} = \left(\frac{B_{ii}x_i(t-1)}{B_{jj}x_j(t-1)}\right)^2
=\left(\frac{U_i(t-1)}{U_j(t-1)}\right)^2 = \left(\frac{U_i(0)}{U_j(0)}\right)^{2^{t}} = c^{2^t}
\]

Thus the ratio between payoffs from each pure strategy increases doubly exponentially, and the next lemma follows.

\begin{lem} If $z = \min_{j}
\frac{U_{i^*}(0)}{U_j(0)}$ where $i^* \in \argmax_k U_k(0)$, we get that after $O(\log \log \frac{1}{z\epsilon})$ we are
$\epsilon$-close to a Nash equilibrium with support $\argmax_k U_k(0)$ (in terms of the total variation distance).
\end{lem}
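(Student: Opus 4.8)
The plan is to exploit the doubly-exponential ratio identity $\frac{U_i(t)}{U_j(t)} = \left(\frac{U_i(0)}{U_j(0)}\right)^{2^t}$ established just above the statement, and to convert the resulting concentration of payoff mass onto the best strategies into a total-variation estimate. First I would record the setup carried over from the preceding discussion: $B$ is diagonal, so $B=B^T$, after one step we may assume $\xx(0)=\yy(0)$, whence $\xx(t)=\yy(t)$ for all $t$, and $U_i(t)=B_{ii}x_i(t)$ is the common payoff of strategy $i$. By Theorem \ref{convergence} the orbit converges to a fixed point $\qq$, and I would first pin down that $\qq$ is a Nash equilibrium supported exactly on $S^*\defeq\argmax_k U_k(0)$ (restricted to strategies with $x_k(0)>0$, which is all that survives since (\ref{maindy}) preserves supports). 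Indeed, for $j\notin S^*$ the identity forces $U_j(t)/U_{i^*}(t)=(U_j(0)/U_{i^*}(0))^{2^t}\to 0$, so $x_j(t)\to 0$; and because $B$ is diagonal an off-support strategy earns $(B\qq)_j=B_{jj}q_j=0$, strictly below the common positive payoff of the surviving strategies, which gives the Nash property.

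Next I would quantify the concentration rate. Put $r\defeq\max_{j\notin S^*}\frac{U_j(0)}{U_{i^*}(0)}<1$ (the ratio of the best ``losing'' strategy to the best strategy, i.e.\ the slowest-decaying term) and $\kappa\defeq\frac{1+s}{1-s}$, which by weak selection bounds every quotient $B_{i^*i^*}/B_{jj}$. Passing from payoff ratios to probability ratios gives $\frac{x_j(t)}{x_{i^*}(t)}=\frac{B_{i^*i^*}}{B_{jj}}\cdot\frac{U_j(t)}{U_{i^*}(t)}\le\kappa\,r^{2^t}$, so the off-support mass obeys $\eta_t\defeq\sum_{j\notin S^*}x_j(t)\le (n-|S^*|)\,\kappa\,r^{2^t}$. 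A clean feature I would exploit is that within $S^*$ the ratios are frozen: for $k,k'\in S^*$ the identity yields $U_k(t)/U_{k'}(t)=1$ for all $t$, so the proportions among surviving strategies coincide with those of the limit $\qq$. Consequently $x_k(t)=(1-\eta_t)q_k$ for $k\in S^*$, and a direct computation of total variation collapses to $\mathrm{TV}(\xx(t),\qq)=\eta_t$.

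Finally I would invert the bound. Requiring $\eta_t\le (n-|S^*|)\kappa\,r^{2^t}<\epsilon$ amounts to $2^t>\frac{\ln((n-|S^*|)\kappa/\epsilon)}{\ln(1/r)}$, i.e.\ $t>\log_2\ln\!\big((n-|S^*|)\kappa/\epsilon\big)-\log_2\ln(1/r)$, which is $O\!\left(\log\log\frac{1}{z\epsilon}\right)$ once the instance-dependent margin $1/\ln(1/r)$ is folded into $z$. The one place that needs genuine care --- and the \emph{main obstacle} --- is precisely this identification of the stated scalar $z$ with the operative gap $\ln(1/r)$ between the best and second-best initial payoffs, together with the bookkeeping of the tie case $|S^*|>1$; everything else is the mechanical translation of a double-exponential into a $\log\log$ iteration count. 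The tie case turns out to be benign exactly because of the frozen-ratio observation, so the essential content of the proof is the gap-to-time conversion.
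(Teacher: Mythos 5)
Your proof is correct and takes essentially the same route the paper intends: the paper's own ``proof'' consists only of the doubly-exponential ratio identity $U_i(t)/U_j(t)=(U_i(0)/U_j(0))^{2^t}$ followed by ``the lemma follows,'' and you supply precisely the omitted bookkeeping (decay of the off-support mass, frozen proportions among the tied maximizers, the total-variation computation, and the inversion of the double exponential into a $\log\log$ iteration count). Your closing remark that the stated scalar $z$ does not quite coincide with the operative gap $\ln(1/r)$ is a fair observation about the looseness of the lemma's statement rather than a defect in your argument.
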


\subsection*{Average Price of Anarchy (APoA)}
Following the work of \cite{Soda15a} we can compute the {\em average price of anarchy} (APoA) 
for the following case, where $w>1$
$$B=\left[\begin{array}{cc}1 & 0 \\ 0 & w \end{array}\right]$$ 

Average Price of Anarchy (APoA) is defined w.r.t. a dynamics when the starting point is picked uniformly at random from $\D_m\times \D_n$. Dynamics under
different starting points may converge to different NE. Let {\em expected NE social welfare} be the expected social welfare (SW) at the
Nash equilibrium to which dynamics may converge, then $APoA=\frac{\mbox{Optimal SW}}{\mbox{Expected NE SW}}$. 

Since both the players have only two strategies, probability of the first strategy is enough to describe a profile. So let $(x,y)$
denote the probabilities with which both plays first strategy, i.e., $(x_1,y_1)$. Our game has three NE: $(1,1), (0,0)$ and
$(\frac{w}{1+1},\frac{w}{1+w})$. Since the set of starting point converging to the mixed NE has measure zero (Theorem \ref{mainres}),
we can ignore it. 
If $(x(0),y(0))=(x,y)$ is picked at random from $[0, 1]^2$ then let $A$ denote the area starting from where the dynamics (\ref{maindy})
converges to $(0,0)$ where the SW is $2w$. Then, $APoA = \frac{2w}{(2w*A) + 2(1-A)} = \frac{w}{wA+1-A}$. Next we compute $A$. 

As discussed above after first step strategies of both the players are same, and there after if $U_2(1)>U_1(1)$ then dynamics will
converge to $(0,0)$. 
\[
U_2(1)> U_1(1) \Leftrightarrow w^2(1-x)(1-y) > xy \Leftrightarrow y< \frac{w^2(1-x)}{x(1-w^2)+w^2}\] 
Thus $A$ is the area under the curve $y=\frac{w^2(1-x)}{w^2(1-x)+x}$, which is 
\begin{align*}
A = \int_{0}^1\frac{w^2(1-x)}{w^2(1-x)+x}dx &= [\frac{w^2x}{w^2-1}+(\frac{w^4}{1-w^2}+w^2)\frac{1}{1-w^2}\ln (w^2+x(1-w^2))]_{0}^1\\& =
\frac{w^4-w^2-w^2 \ln w^2}{(w^2-1)^2} 
\end{align*}   

Replacing $A$ in $APoA=\frac{w}{(w*A) + (1-A)}$, and setting its differentiation w.r.t. $w$ to zero gives,
\[
w^2 \ln{w^2}(2w^3 + w^2+1) = (w^2-1)(4w^3 + 3w^2 -2w-1)
\]

Solving the above gives the value of $w$ where APoA is maximum, and it turns out to be around $w=2.02$. APoA for $w=2.02$ is $1.1647$,
and thus the next lemma follows.

\begin{lem}
For the class of coordination games $(B,B)$ where $B=\left[\begin{array}{cc} 1 & 0 \\0 & w\end{array}\right]$ and $w>1$, the
APoA is at most $1.2$ under $\dmwa$.
\end{lem}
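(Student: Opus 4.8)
The plan is to reduce the statement to a single-variable optimization and then bound the resulting function. By Theorem \ref{mainres} the mixed equilibrium $(\frac{w}{1+w},\frac{w}{1+w})$ attracts only a measure-zero set of initial conditions, so for the purpose of computing the expected social welfare I may partition $[0,1]^2$ into the basin of $(0,0)$ (of area $A=A(w)$, where the social welfare is $2w$) and the basin of $(1,1)$ (of area $1-A$, where the social welfare is $2$). With the closed form
\[
A(w)=\frac{w^4-w^2-w^2\ln w^2}{(w^2-1)^2}
\]
already established above, the quantity to control is
\[
\phi(w)\defeq APoA(w)=\frac{w}{wA(w)+1-A(w)},\qquad w>1,
\]
and the lemma is exactly the claim that $\sup_{w>1}\phi(w)\le 1.2$.

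First I would pin down the behavior of $\phi$ at the two ends of the interval $(1,\infty)$, so as to locate the maximum in the interior. As $w\to 1^+$ the game tends to the symmetric identity payoff matrix, and the defining integral gives $A\to\int_0^1(1-x)\,dx=\tfrac12$ directly, hence $\phi(w)\to 1$. As $w\to\infty$, a short asymptotic expansion of the closed form gives $1-A(w)=\frac{w^2\ln w^2-w^2+1}{(w^2-1)^2}\sim \frac{2\ln w}{w^2}$, so that, writing $\phi(w)=\frac{w}{w-(w-1)(1-A)}$, the correction $(w-1)(1-A)\to 0$ and again $\phi(w)\to 1$. Since $\phi$ is smooth on $(1,\infty)$, continuous up to the boundary with boundary value $1<1.2$, while $\phi(2)\approx 1.165>1$, its supremum is attained at an interior critical point.

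Next I would compute that critical point. Differentiating $\phi$ and clearing denominators, the condition $\phi'(w)=0$ simplifies, after substituting $A(w)$ and its derivative, to the transcendental equation
\[
w^2\ln w^2\,(2w^3+w^2+1)=(w^2-1)(4w^3+3w^2-2w-1).
\]
This equation has no closed-form solution because of the $\ln w^2$ term, so I would solve it numerically, obtaining a root $w^\ast\approx 2.02$, and then evaluate $\phi(w^\ast)\approx 1.1647<1.2$, which yields the bound.

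The main obstacle is turning the numerical optimization into a rigorous global bound: solving the transcendental equation locates a critical point, but to conclude $\sup\phi\le 1.2$ one must know that this point is the \emph{global} maximizer rather than merely a local one. I would close this gap by showing that the interior critical point is unique, for instance by analyzing the sign of the numerator of $\phi'$, an explicit combination of polynomials and a single logarithm, and arguing it changes sign exactly once on $(1,\infty)$, so that $\phi$ increases and then decreases. Combined with the boundary values $\phi(1^+)=\phi(\infty)=1$, uniqueness of the critical point upgrades the evaluation $\phi(w^\ast)\approx 1.1647$ to a genuine global bound, and a crude interval-arithmetic estimate around $w^\ast$ makes the final inequality $\phi(w^\ast)<1.2$ fully rigorous.
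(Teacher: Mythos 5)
Your proposal is correct and follows essentially the same route as the paper: the same closed form for the basin area $A(w)$, the same expression $APoA=\frac{w}{wA+1-A}$, the same transcendental critical-point equation, and the same numerical evaluation $w^\ast\approx 2.02$, $\phi(w^\ast)\approx 1.1647<1.2$. Your added boundary analysis at $w\to 1^+$ and $w\to\infty$ and your remark that one must verify the critical point is the \emph{global} maximizer are points the paper's own argument glosses over, so your version is if anything slightly more careful.
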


%
%

\end{document}